\documentclass[12pt,epsfig,amsfonts]{amsart} 
\setcounter{tocdepth}{1}
\usepackage{amsmath,amsthm,amssymb,amscd,epsfig}
\usepackage{graphicx}

\setlength{\unitlength}{1cm}

\setlength{\topmargin}{0.13in} \setlength{\textheight}{8.85in}
\setlength{\textwidth}{6.6in} \setlength{\oddsidemargin}{-0.2in}
\setlength{\evensidemargin}{-0.2in} \setlength{\unitlength}{1cm}


\newtheorem*{theorema}{Main Theorem}

\newtheorem{lemma}{Lemma}[section]

\newtheorem{remark}[lemma]{Remark}
\newtheorem{prop}[lemma]{Proposition}
\newtheorem{cor}[lemma]{Corollary}


\begin{document}
\author{Hiroki Takahasi}

\address{Department of Mathematics,
Keio University, Yokohama,
223-8522, JAPAN} 
\email{hiroki@math.keio.ac.jp}
\subjclass[2010]{37D25, 37D35, 37G25, 82C26}
\thanks{{\it Keywords}: Chebyshev quadratic map; H\'enon-like maps; thermodynamic formalism; phase transition}


\title[Removal of phase transition of the Chebyshev quadratic] 
{Removal of phase transition of the Chebyshev quadratic
and thermodynamics of H\'enon-like maps\\ near the first bifurcation}
\date{\today}

\begin{abstract}
We treat a problem 
at the interface of dynamical systems and equilibrium statistical physics.
It is well-known that the geometric pressure function $$t\in\mathbb R\mapsto 
\sup_{\mu}\left\{h_\mu(T_2)-t\int\log |dT_2(x)|d\mu(x)\right\}$$ of the
Chebyshev quadratic map $T_2(x)=1-2x^2$ $(x\in\mathbb R)$ is not differentiable at $t=-1$.
We show that this phase transition can be ``removed", by an arbitrarily small singular perturbation of the map $T_2$ into H\'enon-like diffeomorphisms.
A proof of this result relies on an elaboration of the well-known inducing techniques adapted to H\'enon-like dynamics near the first bifurcation.
\end{abstract}

\maketitle


\section{Introduction}
The {\it thermodynamic formalism}, i.e., 
the formalism of equilibrium statistical physics developed by G. W. Gibbs and others,
 has been successfully brought into the ergodic theory of chaotic dynamical systems (see e.g., \cite{Bow75, Rue78} and the references therein).
In the classical setting, it deals with a continuous map $f$ of a
compact metric space $X$ and a continuous function $\varphi$ on $X$,
and looks for {\it equilibrium measures}
which maximize (the minus of) the free energy $F_\varphi(\mu)=h_\mu(f)+\int \varphi d\mu$
among all $f$-invariant Borel probability measures on $X$. 
A relevant problem is to study the regularity of {\it the pressure function}
$t\in\mathbb R\mapsto P(t\varphi)$, where $P(t\varphi)=\sup_{\mu}F_{t\varphi}(\mu)$.

The existence and uniqueness of equilibrium measures depends upon details of the system and the potential.
For transitive uniformly hyperbolic systems and H\"older continuous potentials,
 the existence and uniqueness of equilibrium measures as well as
the analyticity of the pressure function
has been established in the pioneering works of Bowen, Ruelle and Sinai
\cite{Bow75,Rue78,Sin72}.
The latter property is interpreted as {\it the lack of phase transition}.

One important problem in dynamics is to understand  
structurally unstable, or nonhyperbolic systems \cite{PalTak93}.
The main problem which equilibrium statistical physics
tries to clarify is that of phase transitions \cite{Rue78}. 
Hence, it is natural to
study how phase transitions are affected by small perturbations of dynamics.

A natural candidate for a potential is the so-called {\it geometric potential} $\varphi(x)=-\log \|Df|E^u_x\|$,
where $E^u_x$ denotes the unstable direction at $x$ which reflect the chaotic behavior of $f$.
For nonhyperbolic systems, 
$x\mapsto E^u_x$ is often merely measurable, 
and may even be unbounded as in the case of one-dimensional maps with critical points.
These defects sometimes lead to the occurrence of phase transitions, e.g., 
the loss of analyticity or differentiability of the pressure function.
Typically, at phase transitions, there exist multiple equilibrium measures.


As an emblematic example, 
consider the family of quadratic maps $T_a\colon x\in\mathbb R\mapsto 
1-ax^2$ $(a>-1/4)$
and the associated family of geometric pressure functions
$ t\in\mathbb R\mapsto P(-t\log |dT_a|)$ given by
 \begin{equation}\label{pressure1}P(-t\log |dT_a|)=\sup_{\mu}\left\{h_\mu(T_a)-t\int\log |dT_a|d\mu\right\}.\end{equation}
Here, $h_\mu(T_a)$ denotes the Kolmogorov-Sinai 
entropy of $(T_a,\mu)$ and the supremum is taken over all
$T_a$-invariant Borel probability measures.

For $a>2$, the Julia set does not contain the critical point $x=0$, and so 
the dynamics is uniformly hyperbolic and structurally stable. According to the classical theory,
for any $t\in\mathbb R$ there exists a unique equilibrium measure for the potential $-t\log|dT_a|$,
and the geometric pressure function is real analytic.
At the first bifurcation parameter $a=2$
the Julia set contains the critical point, and so the dynamics is nonhyperbolic and
structurally unstable.
 The Lyapunov exponent of any ergodic measure is either $\log2$ or $\log4$, and it is $\log 4$ only 
 for the Dirac measure, denoted by $\delta_{-1}$, at the orientation-preserving fixed point.
 Equilibrium measures for the potential $-t\log|dT_2|$ are: (i) $\delta_{-1}$ if $t<-1$;
  (ii) $\delta_{-1}$ and $\mu_{\rm ac}$ if $t=-1$; (iii) $\mu_{\rm ac}$ if $t>-1$,
where $\mu_{\rm ac}$ denotes the absolutely continuous invariant probability measure. 
 Correspondingly, the pressure function is not real analytic at $t=-1$:
$$P(-t\log|dT_2|)=\begin{cases}
-t\log 4\ \ \text{\rm if} \ t\leq -1;\\
 (1-t)\log 2 \ \  \text{\rm if} \ t>-1.\end{cases}$$
 We say $T_2$ displays the {\it freezing phase transition in negative spectrum},
to be defined below (see the paragraph just before the Main Theorem).

 This phase transition is due to the fact that the measure $\delta_{-1}$ is anomalous:
 it has the maximal Lyapunov exponent, and this value is isolated in the set of Lyapunov exponents of all ergodic measures.
 For all $a\in(-1/4,2)$ the Dirac measure at the orientation preserving fixed point continues to be anomalous, 
 and therefore all the quadratic maps continue to display the
 freezing phase transition \cite[Proposition 4]{Dob09}.
  The freezing phase transition in negative spectrum is often caused by anomalous periodic points.
 For example, see \cite{Lop90} for results on certain
two-dimensional real polynomial endomorphism,
and  \cite{MakSmi00}
 for a complete characterization on
  rational maps of degree $\geq2$ on the Riemannian sphere.

An elementary observation is that
any nonhyperbolic one-dimensional map  sufficiently close to $T_2$ in the $C^2$-topology
displays the freezing phase transition in negative spectrum.
This raises the following question: is it possible to remove the phase transition of $T_2$
by an arbitrarily small singular perturbation to higher dimensional nonhyperbolic maps?
More precisely we ask:
\\

({\bf Removability problem}) {\it Is it possible to ``approximate" $T_2\colon x\in\mathbb R\mapsto 1-2x^2$ by higher dimensional nonhyperbolic maps
which do not display the freezing phase transition in negative spectrum?}\\

The aim of this paper is to show that the phase transition of $T_2$ can be removed,
by an arbitrarily small singular perturbation along {\it the first bifurcation curve} of
a family of H\'enon-like diffeomorphisms
\begin{equation*}\label{henon}
f_{a}\colon(x,y)\in\mathbb R^2\mapsto(1-ax^2,0)+b\cdot\Phi(a,b,x,y),\quad a\in\mathbb R, \ 0<b\ll1,
\end{equation*}
where $a$ is near $2$, $\Phi$ is bounded continuous in $a,b,x,y$ and 
$C^2$ in $a,x,y$. 
The parameter $a$ controls the nonlinearity, and the $b$ controls the dissipation of the map.
Note that, with $b=0$ the family degenerates into the family of quadratic maps.


We proceed to recall some known facts on the first bifurcation of the family of H\'enon-like diffeomorphisms.
If there is no fear of confusion, we suppress $a$ from notation and write $f$ for $f_a$, and so on. 
For $(a,b)$ near $(2,0)$
let $P$, $Q$ denote the fixed saddles of $f$ near $(1/2,0)$ and $(-1,0)$ respectively.
The stable and unstable manifolds of $P$ are respectively defined as follows:
$$W^s(P)=\{z\in\mathbb R^2\colon f^n(z)\to P\text{ as }n\to+\infty\};$$
$$W^u(P)=\{z\in\mathbb R^2\colon f^n(z)\to P \text{ as }n\to-\infty\}.$$
The stable and unstable manifolds of $Q$ are defined in the same way.
It is known \cite{BedSmi06,CLR08,DevNit79,Tak13} that 
there is a  \emph{first bifurcation parameter}
 $a^*=a^*(b)\in\mathbb R$ 
with the following properties:

\begin{figure}
\begin{center}
\includegraphics[height=4cm,width=16cm]
{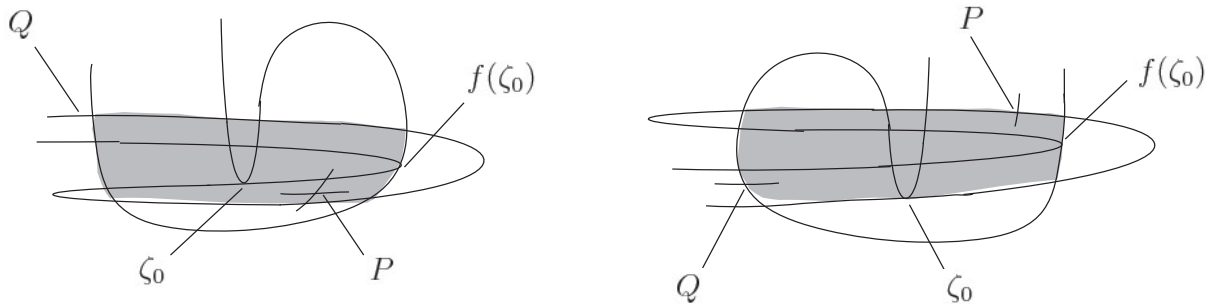}
\caption{Organization of the invariant manifolds at $a=a^*$. There exist two
fixed saddles $P$, $Q$ near $(1/2,0)$, $(-1,0)$
respectively. In the case $\det Df>0$ (left),
$W^s(Q)$ meets $W^u(Q)$ tangentially. In the case $\det Df<0$
(right), $W^s(Q)$ meets $W^u(P)$ tangentially. The shaded
regions represent the rectangle $R$ (See Sect.\ref{family}).}
\end{center}
\end{figure}

\begin{itemize}

\item if $a>a^*$, then the non wandering
set is a uniformly hyperbolic horseshoe;

\item if $a=a^*$, then there is a single orbit of homoclinic or heteroclinic tangency involving (one of) the two fixed saddles (see FIGURE 1).
In the case $\det Df>0$ (orientation preserving), $W^s(Q)$ meets $W^u(Q)$ tangentially.
In the case $\det Df<0$ (orientation reversing), $W^s(Q)$ meets $W^u(P)$ tangentially.
The tangency is quadratic, and the one-parameter family $\{f_{a}\}_{a\in\mathbb R}$ unfolds the tangency at $a^*$ generically. An incredibly rich array of dynamical complexities is unleashed
in the unfolding of this tangency (see e.g., \cite{PalTak93} and the references therein);

\item $a^*\to2$ as $b\to0$.

\end{itemize}
The curve $\{(a^*(b),b)\in\mathbb R\colon b>0\}$ is a {\it nonhyperbolic path} to the quadratic map $T_2$,
consisting of parameters corresponding to nonhyperbolic dynamics.
The main theorem claims that $f_{a^*(b),b}$ does {\it not} display the freezing phase transition 
in negative spectrum.

\begin{figure}
\begin{center}
\includegraphics[height=7cm,width=9.5cm]
{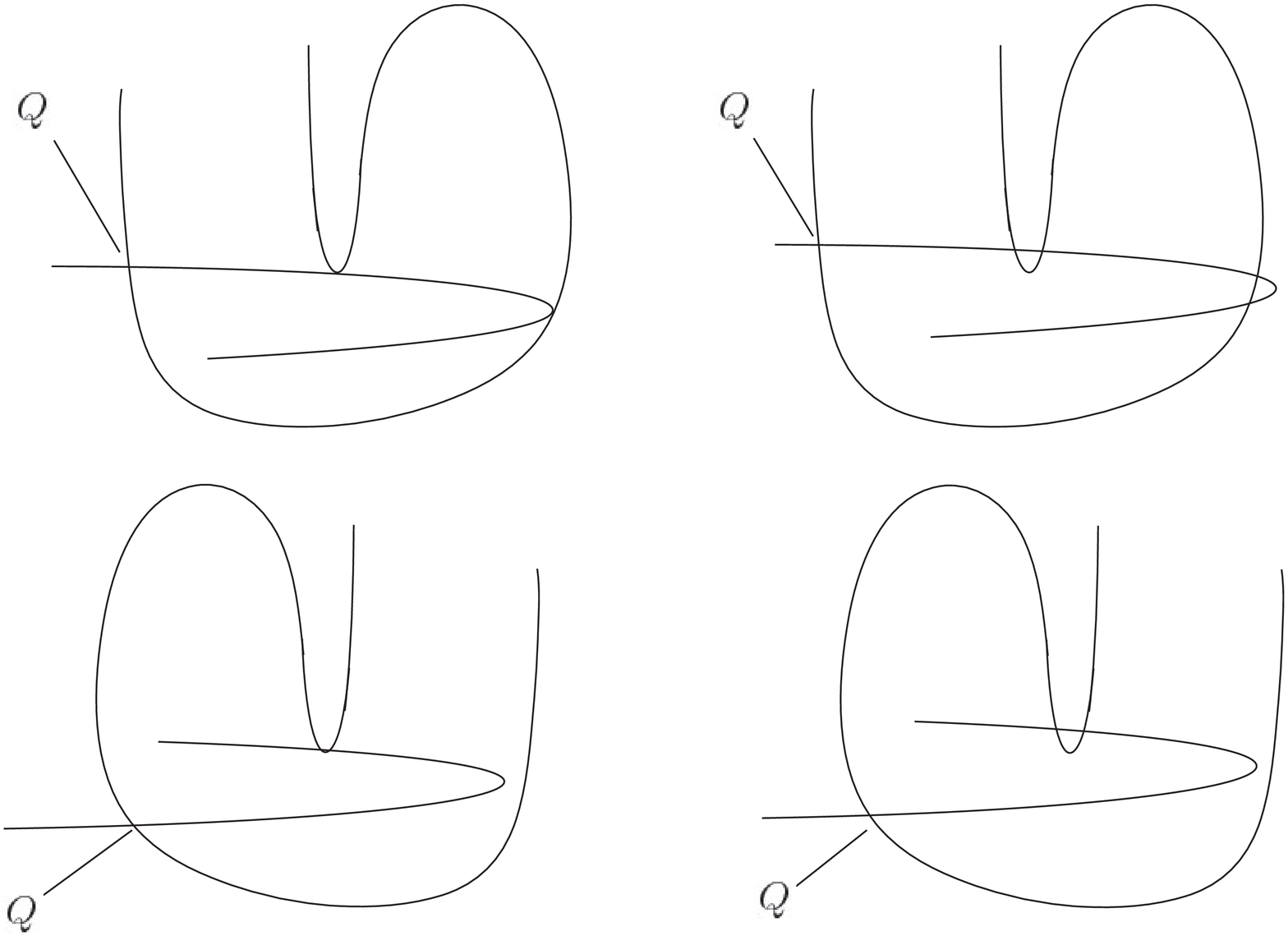}
\caption{Organization of $W^s(Q)$ and $W^u(Q)$: $\det Df>0$ and $a>a^{**}$ close to $a^{**}$ (upper-right); 
$\det Df>0$ and $a=a^{**}$ (upper-left);
$\det Df<0$ and $a>a^{**}$ close to $a^{**}$ (lower-right); $\det Df<0$ and $a=a^{**}$ (lower-left).
}
\end{center}
\end{figure}

To give a precise statement of result we need a preliminary discussion.
We first make explicit the range of the parameter $a$ to consider. Assume $0<b\ll1$.
Let $W^s_{\rm loc}(Q)$ denote the compact curve in $W^s(Q)$ containing
$Q$ such that $W^s_{\rm loc}(Q)\setminus \{Q\}$ has two connected components 
of length $\sqrt{b}$.
Let $\psi\colon\mathbb R\to W^u(Q)$ denote the isometric embedding such that $\psi(0)=Q$
and $\psi(\{x\in\mathbb R\colon x<0\})\cap\Omega=\emptyset$.
Let $$\ell^u=\begin{cases}
\psi(1-1/100,1+1/100)&\text{ if $\det Df>0$;}\\
\psi(3-1/100,3+1/100)&\text{ if $\det Df<0$.}
\end{cases}$$
 Define $$\mathcal G=\{a\in\mathbb R\colon \text{$f^{-2}(W^s_{\rm loc}(Q))$ and $\ell^u$ bounds a compact domain}\},$$
 and $$a^{**}=\inf\mathcal G.$$ 
  Note that $a^*\in\mathcal G$, $a^{**}<a^*$, $a^{**}\to2$ as $b\to0$, and that at $a=a^{**}$,
 $f^{-2}(W^s_{\rm loc}(Q))$ is tangent to $\ell^u$ quadratically.
 Since the family $\{f_a\}_a$ unfolds the tangency $\zeta_0$ at $a=a^*$ generically,
$(a^{**},a^*]\subset\mathcal G$. 
In this paper we assume $a\in(a^{**},a^*]$.

Let $\Omega$ denote 
the non wandering set of $f$, which is a compact $f$-invariant set. 
For nonhyperbolic dynamics beyond the parameter $a^*$,
the notion of ``unstable direction" is not clear.
In the next paragraph, we circumvent this point 
with the Pesin theory (See e.g., \cite{Kat80}),
by introducing a Borel set $\Lambda$
on which {\it an unstable direction} $E^u$ makes sense.

Given $\chi>\epsilon>0$, for each integer $k\geq1$
define $\Lambda_k(\chi,\epsilon)$ to be the set of points
$z\in \Omega$ for which there is a one-dimensional subspace $E_z^u$ of $T_z\mathbb R^2$ such that
for every integers $m\in\mathbb Z$, $n\geq1$ and for all vectors $v^u\in Df^m(E^u_z)$,
$$\|D_{f^m(z)}f^{-n}(v^u)\|\leq  e^{\epsilon k}e^{-(\chi-\epsilon)n}e^{\epsilon|m|}\|v^u\|.$$
Since $f^{-1}$ expands area, the subspace $E^u_z$ with this property is unique when it exists,
and characterized by the following backward contraction property
\begin{equation}\label{eu}\limsup_{n\to+\infty}\frac{1}{n}\log\|Df^{-n}|E^u_z\|< 0.\end{equation}
Here, $\|\cdot\|$ denotes the norm induced from the Euclidean metric on $\mathbb R^2$.
Note that $\Lambda_k(\chi,\epsilon)$ is a closed set, and $z\in\Lambda_k(\chi,\epsilon)
\mapsto E_z^u$ is continuous.
Moreover, if $z\in\Lambda_k(\chi,\epsilon)$ then $f(z),f^{-1}(z)\in\Lambda_{k+1}(\chi,\epsilon)$.
Therefore, the Borel set $$\Lambda(\chi,\epsilon)=\bigcup_{k=1}^\infty\Lambda_k(\chi,\epsilon)$$
is $f$-invariant: $f(\Lambda(\chi,\epsilon))=\Lambda(\chi,\epsilon)$.
Then the Borel set
$$\Lambda=\bigcup_{\epsilon>0}\bigcup_{\chi>\epsilon}\Lambda(\chi,\epsilon)$$
is $f$-invariant as well, and the map $z\in\Lambda\mapsto E_z^u$ is Borel measurable with the invariance property 
 $Df(E^u_z)=E^u_{f(z)}$.
 The one-parameter family of potentials we are concerned with is 
$$-t\log J^u\quad t\in\mathbb R,$$
where $J^u(z)=\|Df|E^u_z\|$ $(z\in\Lambda)$.
Since $\Omega$ is compact and $f$ is a diffeomorphism, 
$J^u$ is bounded from above
and bounded away from zero.
We shall only take into consideration measures which give full weight to $\Lambda$.

\begin{figure}
\begin{center}
\includegraphics[height=4.5cm,width=7cm]
{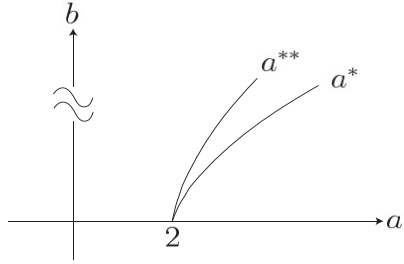}
\caption{The landscape in $(a,b)$-space, $b\ll1$. The parameters $a^*=a^*(b)$ and $a^{**}=a^{**}(b)$ converge to $2$ as $b\to0$.
The dynamics for parameters at the right of the $a^*$-curve is uniformly hyperbolic.}
\end{center}
\end{figure}

The chaotic behavior of $f$ is produced by the non-uniform
expansion along the unstable direction $E^u$, and thus
a good deal of information will be obtained by studying 
the associated geometric pressure function $t\in\mathbb R\mapsto P(-t\log J^u)$ defined by
\begin{equation}\label{pressure2}P(-t\log J^u)=\sup\left\{h_\mu(f)-t\lambda^u(\mu)\colon
\mu\in\mathcal M_0(f)\right\},\end{equation}
where $$\mathcal M_0(f)=\{\mu\in\mathcal M(f)\colon\mu(\Lambda)=1\},$$
and  $h_\mu(f)$ denotes the entropy of $(f,\mu)$,
and $$\lambda^u(\mu)=\int\log J^ud\mu,$$
which we call an {\it unstable Lyapunov exponent} of $\mu\in\mathcal M_0(f)$.
Let us call  any measure in $\mathcal M(f)$ which attains the supremum in $P(-t\log J^u)$ 
{\it an equilibrium
measure} for the potential $-t\log J^u$.

We suggest the reader to compare \eqref{pressure1} and \eqref{pressure2}.
One important difference is that the function $\log|dT_a|$ in \eqref{pressure1} is unbounded,
while the function $\log J^u$ in \eqref{pressure2} is bounded.
Another important difference is that the class of measures taken into consideration is reduced
in \eqref{pressure2}.

It is natural to ask in which case $\mathcal M_0(f)=\mathcal M(f)$.
This is the case for  $a=a^*$ because $\Lambda=\Omega$ from the result in
  \cite{SenTak1}.
In fact, $\mathcal M_0(f)=\mathcal M(f)$ still holds 
for ``most" parameters immediately right after the first bifurcation at $a^*$.
See Sect.\ref{equal}  for more details.

The potential $-t\log J^u$ and the associated pressure function 
deserve to be called {\it geometric}, primarily because
 Bowen's formula holds at $a=a^*$ \cite[Theorem B]{SenTak2}: 
 the equation $P(-t\log J^u)=0$ has the unique solution which coincides
with the (unstable) Hausdorff dimension of $\Omega$.
We expect that the same formula holds for the above ``most" parameters.

We are in position to state our main result.
Let 
\begin{align*}
\lambda_m^u&=\inf\{\lambda^u(\mu)\colon\mu\in\mathcal M_0(f)\};\\
\lambda_M^u&=\sup\{\lambda^u(\mu)\colon\mu\in\mathcal M_0(f)\},
\end{align*}
and define {\it freezing points} $t_c$, $t_f$ by
\begin{align*}
t_c&=\inf\left\{t\in\mathbb R\colon P(-t\log J^u)>-t\lambda_M^u\right\};\\
t_f&=\sup\{t\in\mathbb R\colon P(-t\log J^u)>-t\lambda_m^u\}.
\end{align*}
This denomination is because equilibrium measures do not change any more for $t<t_c$
or $t>t_f$.
Indeed it is elementary to show the following:
\begin{itemize}
\item $-\infty\leq t_c<0<t_f\leq+\infty$;

\item if $t\in(t_c,t_f)$, then
any equilibrium measure for $-t\log J^u$ (if it exists) has positive entropy;

\item if $t\leq t_c$, then $P(-t\log J^u)=-t\lambda_M^u$.
If $t\geq t_f$, then $P(-t\log J^u)=-t\lambda_m^u$.

\end{itemize}

Let us say that {\it $f$ displays the freezing phase transition in negative (resp. positive) spectrum}
if $t_c$ (resp. $t_f$) is finite.


\begin{figure}
\begin{center}
\includegraphics[height=4cm,width=6.5cm]
{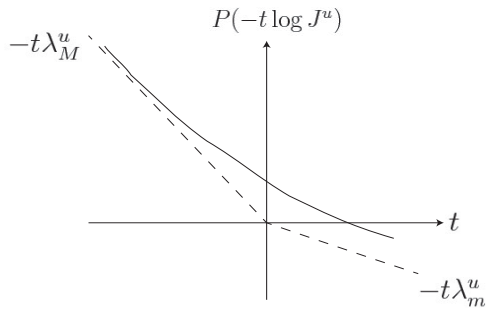}
\caption{At $a=a^*$, 
the graph of the pressure function $t\mapsto P(-t\log J^u)$ has the line $-t\lambda_M^u$ as its asymptote as $t\to-\infty$, but never touches it (Main Theorem).}
\end{center}
\end{figure}

\begin{theorema}
Let $\{f_{a}\}$ be a family of H\'enon-like diffeomorphisms. 
If $b>0$ is sufficiently small 
and $a\in(a^{**}(b),a^*(b)]$, 
then $f_{a}$ does not display the freezing phase transition in negative spectrum.
If $a=a^*(b)$, then $P(-t\log J^u)=-t\lambda_M^u+o(1)$ as $t\to-\infty$. 
\end{theorema}
The main theorem states that
 the graph of the pressure function does not touch the line $-t\lambda_M^u$.
At $a=a^*$ we have more information: this line is the asymptote of the graph of 
$P(-t\log J^u)$ as $t\to-\infty$ (see FIGURE 4).

The main theorem reveals a difference between the bifurcation structure of quadratic maps
and that of H\'enon-like maps from the thermodynamic point of view. As mentioned earlier, 
the quadratic maps display the freezing phase transition in negative spectrum
for all parameters beyond the bifurcation, while this is not the case for H\'enon-like
maps. 

The freezing phase transition for negative spectrum does occur for 
some parameters $<a^{**}$.
It is well-known that there exists a parameter set of 
 positive Lebesgue measure corresponding
to non-uniformly hyperbolic strange attractors \cite{BenCar91,MorVia93,WanYou01}. 
For these parameters, the non-wandering set is the disjoint union of the strange attractor 
and the fixed saddle near $(-1,0)$
\cite{Cao99, CaoMao00}. For these parameters it is possible to show that the Dirac measure at 
the saddle is anomalous.

Regarding freezing phase transitions in positive spectrum of H\'enon-like maps,
the known result is very much limited. Let $\delta_Q$ denote the Dirac measure at $Q$.
It was proved in \cite[Proposition 3.5(b)]{Tak15} that if $a=a^*$ and
$\lambda_m^u=(1/2)\lambda^u(\delta_Q)$, then $f$ does not display the freezing phase transition in positive spectrum. However, since $\lambda_m^u\to\log2$ and $\lambda^u(\delta_Q)\to\log4$ as $b\to0$, 
it is not easy to prove or disprove this equality.

For a proof of the main theorem we first show that 
 $\delta_Q$ is the unique measure
which maximizes the unstable Lyapunov exponent (see Lemma \ref{maximal}). 
Then it suffices to show that for any $t<0$ there exists a measure $\nu_t\in\mathcal M_0(f)$ such that
$h_{\nu_t}(f)-t\lambda^u(\nu_t)>-t\lambda^u(\delta_Q).$
To see the subtlety of showing this, note that 
from the variational principle $\nu_t$ must satisfy
\begin{equation}\label{require}
t\left(\lambda^u(\nu_t)-\lambda^u(\delta_Q)\right)<h_{\nu_t}(f)\leq h_{\rm top}(f),\end{equation}
where $h_{\rm top}(f)$ denotes the topological entropy of $f$.
As $-t$ becomes large, the unstable Lyapunov exponent becomes more important and
we must have
$\lambda^u(\nu_t)\to\lambda^u(\delta_Q)$ as $t\to-\infty$.
A naive application of the Poincar\'e-Birkhoff-Smale theorem \cite{PalTak93}
to a transverse homoclinic point of $Q$ indeed yields
a measure whose unstable Lyapunov 
exponent is approximately that of $\delta_Q$, but it is not clear if the entropy
is sufficiently large for the first inequality in \eqref{require} to hold.

Our approach is based on the well-known inducing techniques adapted
to the H\'enon-like maps, inspired by Makarov $\&$ Smirnov 
\cite{MakSmi00} (see also Leplaideur \cite{Lep11}). 
The idea is to carefully choose for each $t<0$ a hyperbolic subset $H_t$ of $\Omega$ such that
the first return map to it is topologically conjugate to the full shift on a finite number of symbols.
We then spread out the maximal entropy measure of the first return map to produce a measure 
with the desired properties. As $-t$ becomes large, more symbols are needed
in order to fulfill the first inequality in \eqref{require}.

The hyperbolic set $H_t$ is chosen in such a way that any orbit contained in it spends 
a very large proportion of time near the saddle $Q$, during which the unstable directions are roughly parallel
to $E^u_Q$. More precisely, for any point $z\in H_t$
with the first return time $R(z)$ to $H_t$, the fraction
$$\frac{1}{R(z)}\#\{n\in\{0,1,\ldots,R(z)-1\}\colon \text{$|f^n(z)-Q|\ll1$ and 
${\rm angle}(E^u_{f^n(z)},E^u_Q)\ll1$}\}$$
is nearly $1$.
A standard bounded distortion argument then allows us to copy the unstable Lyapunov exponent of $\delta_Q$.
Note that, if the unstable direction is not continuous (which is indeed the case at $a=a^*$ \cite{SenTak2} and considered to be the case for most $a<a^*$),
then the closeness of base points $|f^n(z)-Q|\ll1$  does not guarantee 
the closeness of the corresponding unstable directions ${\rm angle}(E^u_{f^n(z)},E^u_Q)\ll1$.

In order to let points stay near the saddle $Q$ for a very long period of time,
one must allow them to enter deeply into the critical zone.
As a price to pay, the directions of $E^u$ along the orbits get switched due to the folding behavior near the critical zone.
In order to restore the horizontality of the direction and establish the closeness to $E^u_Q$,
we develop the binding argument relative to dynamically critical points, inspired by Benedicks $\&$ Carleson \cite{BenCar91}. 
The point is that one can choose the hyperbolic set $H_t$ so that the effect of the folding is not significant,
 and the restoration can be done in a uniformly bounded time.
 This argument works at the first bifurcation parameter $a^*$, and even for all parameters
in $(a^{**},a^*)$ because only those parts in the phase space not being destroyed by the homoclinic bifurcation 
are involved.



The rest of this paper consists of three sections. In Sect.2 we introduce the key concept of critical points,
and develop estimates related to them.
 In Sect.3 we use the results in Sect.2 to construct the above-mentioned hyperbolic set.
In Sect.4 we  finish the proof of the main theorem and provide more details, 
on the abundance of parameters satisfying $\mathcal M_0(f)=\mathcal M(f)$.

\section{Local analysis near critical orbits}
For the rest of this paper, we assume $f=f_a$ and $a\in(a^{**},a^*]$.
In this section we develop a local analysis near the orbits of critical points.
 The main result is Proposition \ref{binding} which controls the norms of the derivatives in the unstable direction,
 along orbits which pass through critical points.

For the rest of this paper 
we are concerned with the following positive small constants: $\tau$, $\delta$, $b$ chosen in this order,
the purposes of which are as follows:
\begin{itemize}

\item $\tau$ is used to exclusively in the proof of Proposition \ref{binding};

\item $\delta$ determines the size of a critical region (See Sect.\ref{critical});

\item $b$ determines the magnitude of the reminder term in \eqref{henon}.
\end{itemize}
We shall write $C$ with or without indices to denote any constant which is independent of $\tau$, $\delta$, $b$.
For $A,B>0$ we write $A\approx B$ if both $A/B$ and $B/A$ are bounded from above
by constants independent of $\tau$, $\delta$, $b$.
If $A\approx B$ and the constants can be made arbitrarily close to $1$ by appropriately choosing $\tau$, $\delta$, $b$,
then we write $A\asymp B$.

For a nonzero tangent vector $v=\left(\begin{smallmatrix}\xi\\\eta\end{smallmatrix}\right)$ at a point $z\in\mathbb R^2$, define
${\rm slope}(v)=|\eta|/|\xi|$ if $\xi\neq0$, and
${\rm slope}(v)=\infty$ if $\xi=0$.
Similarly, for the one-dimensional subspace $V$ of $T_z\mathbb R^2$ spanned by $v$, define
${\rm slope}(V)={\rm slope}(v)$.
Given a $C^1$ curve $\gamma$ in $\mathbb R^2$, the length is denoted by ${\rm length}(\gamma)$.
The tangent space of $\gamma$ at $z\in\gamma$ is denoted by $T_z\gamma$.
The Euclidean distance between two points $z_1,z_2$ of $\mathbb R^2$ is denoted by $|z_1-z_2|$.
The angle between two tangent vectors $v_1$, $v_2$ is denoted by ${\rm angle}(v_1,v_2)$.
The interior of a subset $X$ of $\mathbb R^2$ is denoted by ${\rm Int}(X)$.

\subsection{The non wandering set}\label{family}
Recall that the map $f$ has exactly two fixed points, which are saddles:
$P$ is the one near $(1/2,0)$ and $Q$ is the other one near $(-1,0)$.
The orbit of tangency at the first bifurcation parameter $a=a^*$ 
intersects a small neighborhood of the origin $(0,0)$ exactly at one point, denoted by $\zeta_0$.
If $\det Df>0$ then $\zeta_0\in W^s(Q)\cap W^u(Q)$.
If $\det Df<0$ then $\zeta_0\in W^s(Q)\cap W^u(P)$
(See FIGURE 1).

By a {\it rectangle} we mean any
compact domain bordered by two compact curves in $W^u(P)\cup W^u(Q)$ and two in 
$W^s(P)\cup W^s(Q)$. By an {\it unstable side} of a
rectangle we mean any of the two boundary curves in $W^u(P)\cup W^u(Q)$. A {\it
stable side} is defined similarly.

In the case $\det Df>0$ (resp. $\det Df<0$) let $R=R_{a}$ denote the rectangle which is
 bordered by two compact curves in $W^u(Q)$ (resp. $W^u(P)$) and two in $W^s(Q)$,
and contains $\Omega$.
The rectangle with these properties is unique, and is
 located near the segment $\{(x,0)\in\mathbb R^2\colon |x|\leq1\}$.
 One of the stable sides of $R$ contains $Q$, which is denoted by $\alpha_0^-$.
The other stable side of $R$ is denoted by $\alpha_0^+$.
We have $f(\alpha_0^+)\subset\alpha_0^-$.
At $a=a^*$, one of the unstable sides of $R$ contains the point $\zeta_0$
of tangency near $(0,0)$
 (See FIGURE 1).

\subsection{Non critical behavior}
Define
$$I(\delta)=\{(x,y)\in R\colon |x|<\delta\},$$
and call it {\it a critical region}.

By a \emph{$C^2(b)$-curve} we mean a compact, nearly horizontal $C^2$ curve in $R$ such that the slopes of tangent vectors to it 
are $\leq\sqrt{b}$ and the curvature is everywhere $\leq\sqrt{b}$. 

\begin{lemma}
Let $\gamma$ be a $C^2(b)$-curve in $R\setminus I(\delta)$. 
Then $f(\gamma)$ is a $C^2(b)$-curve.
\end{lemma}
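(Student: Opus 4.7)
The plan is to reduce the lemma to a direct chain-rule computation, exploiting the explicit form $f(x,y)=(1-ax^2,0)+b\,\Phi(a,b,x,y)$ together with the constant hierarchy $\tau\gg\delta\gg b$, which lets me take $b$ as small as I like relative to any positive power of $\delta$. Because every tangent vector to $\gamma$ has slope $\leq\sqrt b<1$, I would first write $\gamma$ as a graph $y=\phi(x)$ over an $x$-interval contained in $\{|x|\geq\delta\}$, so that $|\phi'|\leq\sqrt b$. Combining this slope bound with the curvature bound $|\phi''|/(1+\phi'^2)^{3/2}\leq\sqrt b$ for $\gamma$ yields $|\phi''|\leq 2\sqrt b$, which is what I will need in the second-derivative estimate below.

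Next I would parametrize the image by $f(x,\phi(x))=(X(x),Y(x))$ with
\[
X(x)=1-ax^2+b\,\Phi_1(a,b,x,\phi(x)),\qquad Y(x)=b\,\Phi_2(a,b,x,\phi(x)),
\]
and differentiate. The decisive input is $|x|\geq\delta$, which gives $|X'(x)|\geq 2a\delta-Cb$, bounded below of order $\delta$, while $|Y'(x)|\leq Cb$. Hence the slope of $f(\gamma)$ at $f(x,\phi(x))$ is $|Y'/X'|\leq Cb/\delta$. Differentiating once more, $|X''|$ stays $O(1)$ (its dominant term is $-2a$) and $|Y''|=O(b)$, since all other contributions carry a factor $b$. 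Therefore
\[
\kappa(f(\gamma))=\frac{|X'Y''-Y'X''|}{(X'^2+Y'^2)^{3/2}}\leq \frac{Cb}{\delta^3}.
\]
Choosing $b$ sufficiently small relative to $\delta^2$ and $\delta^6$ respectively brings both bounds below $\sqrt b$, which is admissible in the constant hierarchy.

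The main step requiring care is the curvature estimate: it would be easy to lose a factor of $\sqrt b$ if one were careless with the $\phi''$-terms in $X''$ and $Y''$. The estimate closes precisely because the $\Phi$-contribution is premultiplied by $b$, so the dangerous terms $b\,\partial_y\Phi_i\cdot\phi''$ contribute only $O(b^{3/2})$, absorbed into the $O(b)$ bound for $|Y''|$. The remaining point that $f(\gamma)\subset R$ should follow from the invariance structure of $\partial R\subset W^u(P)\cup W^u(Q)\cup W^s(P)\cup W^s(Q)$ recalled in Sect.\ref{family} together with $f(\alpha_0^+)\subset\alpha_0^-$, so that $\gamma\subset R\setminus I(\delta)$ is mapped into $R$. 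Compactness, connectedness and $C^2$-regularity of $f(\gamma)$ are automatic from $f$ being a diffeomorphism.
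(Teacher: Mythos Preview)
Your argument is correct and arrives at the same bounds, but the route differs from the paper's. The paper proves the lemma in one line by invoking Lemma~\ref{expand} (which gives ${\rm slope}(Df(v))\leq\sqrt b$ and the lower bound $\|Df(v)\|\geq \delta e^{\lambda_0}\|v\|$) together with the general curvature recursion of Lemma~\ref{curvature} (quoted from \cite{Tak11}); plugging $i=1$ into that recursion yields $\kappa_1\leq Cb\,\delta^{-3}\kappa_0+Cb\leq\sqrt b$. You instead parametrize $\gamma$ as a graph and compute $X',Y',X'',Y''$ directly from the explicit form $f(x,y)=(1-ax^2,0)+b\,\Phi$, obtaining the same estimates ${\rm slope}\leq Cb/\delta$ and $\kappa\leq Cb/\delta^3$ by hand. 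Your approach is more self-contained (it does not rely on the external curvature lemma) and makes transparent exactly where the factor $b$ in front of $\Phi$ is used; the paper's approach has the advantage of being reusable for higher iterates, which is why Lemma~\ref{curvature} is stated in that generality and used again in Lemma~\ref{curvature2}.

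One small point: your justification that $f(\gamma)\subset R$ via ``invariance of $\partial R$'' is not quite an argument, since $f(R)\not\subset R$ in general (the image is horseshoe-shaped). The correct reason is that for $(x,y)\in R\setminus I(\delta)$ the image has first coordinate in $[1-a+O(b),\,1-a\delta^2+O(b)]$ and second coordinate $O(b)$, which lies between the unstable sides of $R$ and to the left of $\alpha_0^+$; this is straightforward and the paper does not spell it out either.
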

\begin{proof}
Follows from Lemma \ref{expand} and the lemma below.
\end{proof}

Put
$\lambda_0=\frac{99}{100}\log2.$
\begin{lemma}\label{expand}
Let $z\in R$ and $n\geq1$ be an integer such that $z,f(z),\ldots,f^{n-1}(z)\notin I(\delta)$.
Then for any nonzero vector $v$ at $z$ with ${\rm slope}(v)\leq\sqrt{b}$,
$${\rm slope}(Df^n(v))\leq\sqrt{b}\ \text{ and }\ \|Df^n(v)\|\geq \delta e^{\lambda_0 n}\|v\|.$$
If moreover $f^n(z)\in I(\delta)$, then 
$\|Df^n(v)\|\geq e^{\lambda_0 n}\|v\|.$
\end{lemma}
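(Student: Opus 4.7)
The plan is to reduce this two-dimensional expansion estimate to the classical one-dimensional Ma\~n\'e expansion lemma for the quadratic map $T_a$, exploiting the smallness of $b$ compared with $\delta$. My first step will be a cone-invariance check: at a point $(x,y)\in R\setminus I(\delta)$ the derivative of $f_a$ has the form
\begin{equation*}
Df=\begin{pmatrix}-2ax+O(b) & O(b)\\ O(b) & O(b)\end{pmatrix},
\end{equation*}
so a vector $v=(\xi,\eta)$ with $|\eta|\leq\sqrt{b}\,|\xi|$ is sent to a vector of slope at most $O(b)/(2a\delta-O(b))$, which is bounded by $\sqrt{b}$ because $b$ is chosen much smaller than $\delta^{2}$. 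Iterating this one-step estimate under the hypothesis that every intermediate iterate lies outside $I(\delta)$ gives the first assertion.

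With the cone invariance in place, $\|v\|\asymp|\xi|$ at every step along the orbit, and a one-step norm computation yields $\|Df(v)\|\asymp 2a|x|\cdot\|v\|$. Compounding,
\begin{equation*}
\|Df^{n}(v)\|\asymp\Bigl(\prod_{i=0}^{n-1}2a|x_{i}|\Bigr)\|v\|,
\end{equation*}
where $x_{i}$ denotes the $x$-coordinate of $f^{i}(z)$. The problem thus reduces to bounding the scalar product $\prod_{i=0}^{n-1}2a|x_{i}|$ from below by $\delta\,e^{\lambda_0 n}$, and by $e^{\lambda_0 n}$ in case $f^n(z)\in I(\delta)$.

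For the pure quadratic map $T_a$ with $a$ close to $2$ this is a classical Ma\~n\'e-type expansion estimate (for $T_2$ it reads off directly from the semiconjugacy $\cos\pi\theta$ to the doubling map): whenever the $T_a$-orbit $\{\tilde x_i\}$ of $x_0$ avoids $(-\delta,\delta)$ for $0\leq i\leq n-1$, one has $|(T_a^n)'(x_0)|\geq\delta\,e^{\lambda n}$ with $\lambda$ arbitrarily close to $\log 2$ for $\delta$ small, and if further $\tilde x_n\in(-\delta,\delta)$ the prefactor $\delta$ is removed by pulling back a definite-length interval around $\tilde x_n$ and invoking the Koebe distortion principle. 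I will transfer this to the H\'enon-like setting by noting that $x_{i+1}=1-ax_i^{2}+O(b)$; since every factor $2a|x_i|$ is bounded below by $2a\delta\gg b$, the ratio between $\prod_{i=0}^{n-1}2a|x_i|$ and the analogous product along the true $T_a$-orbit starting from $x_0$ is of the form $\prod(1+O(b/\delta))\leq e^{Cnb/\delta}$, and this exponentially small multiplicative loss is absorbed by replacing the 1D rate $\lambda$ just below $\log 2$ by the strictly smaller constant $\lambda_0=\frac{99}{100}\log 2$.

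The principal obstacle is recovering the improvement from $\delta\,e^{\lambda_0 n}$ to $e^{\lambda_0 n}$ in the case $f^n(z)\in I(\delta)$. In one dimension this comes from Koebe distortion on the pullback, under $T_a^n$, of a definite-length neighborhood of $\tilde x_n$: that preimage has length $\asymp 1/|(T_a^n)'(x_0)|$, while its image has length $\asymp\delta$, forcing the derivative to be at least $e^{\lambda_0 n}$ without the $\delta$ prefactor. To carry this over to two dimensions I will work with a short $C^2(b)$-curve through $z$ tangent to $v$, verify inductively that its images remain $C^2(b)$-curves of controlled curvature, and apply the distortion argument along this curve. Keeping distortion uniform in $n$ is where the $\sqrt{b}$-cone and the choice $b\ll\delta^{2}$ do the real work; beyond that the argument is routine.
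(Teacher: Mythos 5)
Your overall strategy --- horizontal cone invariance, reduction to the scalar product $\prod 2a|x_i|$ along the $x$-coordinates of the orbit, and absorbing the $O(b)$ errors into the slack between $\log 2$ and $\lambda_0=\tfrac{99}{100}\log 2$ --- is precisely what the paper's one-line proof gestures at, so the route is the same. But the transfer step has a genuine gap. You compare $\prod_{i=0}^{n-1}2a|x_i|$ with the corresponding product along the true $T_a$-orbit $\tilde x_i=T_a^i(x_0)$ and assert the ratio is $\prod(1+O(b/\delta))$. That would need $|x_i-\tilde x_i|=O(b)$ uniformly in $i$, but the two orbits diverge exponentially: from $x_{i+1}=T_a(x_i)+O(b)$ one only gets $|x_{i+1}-\tilde x_{i+1}|\leq |T_a'|\,|x_i-\tilde x_i|+O(b)\leq 4|x_i-\tilde x_i|+O(b)$, so the discrepancy grows like $b\cdot 4^i$ and is $O(1)$ after roughly $\log(1/b)/\log 4$ steps. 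Since the lemma is used with $n$ arbitrarily large, comparing to a single fixed 1D orbit does not work.

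The correct reduction does not fix a comparison orbit: $\{x_i\}$ is a $Cb$-pseudo-orbit of $T_a$, and one should prove the Ma\~n\'e-type expansion directly for pseudo-orbits avoiding $(-\delta,\delta)$, which is legitimate because the argument then never tracks two orbits over long times. For $T_2$ the cleanest way is the conjugacy $x=\cos(\pi s)$ to the tent map, which gives the identity $|(T_2^n)'(x)|=2^n\sqrt{1-x_n^2}\big/\sqrt{1-x^2}$. This also dispatches the last assertion without any Koebe argument: $\sqrt{1-x^2}\leq 1$ always, and $x_n\in(-\delta,\delta)$ forces $\sqrt{1-x_n^2}\geq\sqrt{1-\delta^2}$, so the $\delta$ prefactor disappears at once (with the loss absorbed by $\lambda_0<\log2$). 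As written, your Koebe step is also vaguer than it should be --- the claim that the pullback interval has length $\asymp 1/|(T^n)'|$ already presupposes a distortion bound of the very type being established, so as stated the argument is circular.
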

\begin{proof}
Follows from the fact that $|DT_2(x)|>2$ outside of $[-1,1]$ and that $b$ is very small.
\end{proof}
\begin{lemma}\label{curvature}{\rm (\cite[Lemma 2.3]{Tak11})}
Let $\gamma$ be a $C^2$ curve in $R$ and $z\in\gamma$. For each integer $i\geq0$ let $\kappa_i(z)$ denote the curvature of $f^i(\gamma)$ at $f^i(z)$.
Then for any nonzero vector $v$ tangent to $\gamma$ at $z$,
$$\kappa_i(z)\leq (Cb)^i\frac{\|v\|^3}{\|Df^i(v)\|^3}\kappa_0(z)+\sum_{j=1}^i(Cb)^j\frac{\|Df^j(v)\|^3}{\|Df^i(v)\|^3}.$$
\end{lemma}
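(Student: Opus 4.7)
My approach is induction on $i$, combining the standard second-order chain rule for curves with a ``$b$-degeneracy'' enjoyed by the H\'enon-like derivative. The base case $i=0$ is immediate. For the inductive step, fix a $C^2$ parametrization $\sigma$ of $f^i(\gamma)$ with $\sigma(0)=f^i(z)$ and $\sigma'(0)=Df^i(v)$. The chain rule gives $(f\circ\sigma)'(0)=Df^{i+1}(v)$ and $(f\circ\sigma)''(0)=D^2f(Df^i(v),Df^i(v))+Df(\sigma''(0))$. The planar curvature formula, the bilinear identity $Df(u)\times Df(w)=\det(Df)\,(u\times w)$, and the relation $|Df^i(v)\times\sigma''(0)|=\kappa_i(z)\|Df^i(v)\|^3$ combine to give
\[
\kappa_{i+1}(z)\leq\frac{\bigl|Df^{i+1}(v)\times D^2f\bigl(Df^i(v),Df^i(v)\bigr)\bigr|+|\det Df|\,\kappa_i(z)\|Df^i(v)\|^3}{\|Df^{i+1}(v)\|^3}.
\]

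The heart of the argument is the pair of uniform estimates
\[
\bigl|Df(w)\times D^2f(w,w)\bigr|\leq Cb\|w\|^3,\qquad |\det Df|\leq Cb,
\]
valid for $z\in R$ and any $w\in T_z\mathbb R^2$. Both reflect the structure of the perturbation: at $b=0$ the map reduces to $(x,y)\mapsto(1-ax^2,0)$, whose Jacobian has a zero second row and whose Hessian takes values along the $x$-axis, so $Df(w)$ and $D^2f(w,w)$ are both horizontal and their wedge vanishes identically, while $\det Df$ vanishes as well. A careful expansion of the $O(b)$ corrections contributed by $\Phi$, using the boundedness of $R$ and of the partials of $\Phi$, recovers exactly the factor $Cb$ in each. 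Substituting $w=Df^i(v)$ reduces the previous display to the clean one-step recurrence
\[
\kappa_{i+1}(z)\leq Cb\,\frac{\|Df^i(v)\|^3}{\|Df^{i+1}(v)\|^3}\bigl(1+\kappa_i(z)\bigr).
\]

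To finish I set $K_i:=\kappa_i(z)\|Df^i(v)\|^3$, which turns the recurrence into the linear form $K_{i+1}\leq Cb\bigl(K_i+\|Df^i(v)\|^3\bigr)$. Iterating and telescoping yields $K_i\leq(Cb)^iK_0+\sum_{j=0}^{i-1}(Cb)^{i-j}\|Df^j(v)\|^3$; dividing by $\|Df^i(v)\|^3$ and reindexing the summation puts the estimate into the form stated in the lemma. The only genuine difficulty is verifying the wedge--determinant degeneracy estimates above; the remaining bookkeeping is routine, and I anticipate no further conceptual obstacle.
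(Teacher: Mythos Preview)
The paper does not prove this lemma at all; it is simply quoted from \cite[Lemma~2.3]{Tak11}. Your argument is the standard one and is correct in substance: the one-step curvature recursion via the chain rule, together with the two $O(b)$ degeneracy estimates
\[
\bigl|Df(w)\times D^2f(w,w)\bigr|\leq Cb\,\|w\|^3,\qquad |\det Df|\leq Cb,
\]
is exactly how such bounds are obtained for H\'enon-like maps, and your justification of both estimates from the form $(x,y)\mapsto(1-ax^2,0)+b\,\Phi$ is right.

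There is one bookkeeping slip. Your iteration of $K_{i+1}\leq Cb\bigl(K_i+\|Df^i(v)\|^3\bigr)$ gives
\[
\kappa_i(z)\leq (Cb)^i\frac{\|v\|^3}{\|Df^i(v)\|^3}\,\kappa_0(z)+\sum_{j=1}^{i}(Cb)^{j}\,\frac{\|Df^{\,i-j}(v)\|^3}{\|Df^i(v)\|^3},
\]
and no reindexing turns $\|Df^{\,i-j}(v)\|^3$ into the $\|Df^{j}(v)\|^3$ that appears in the displayed statement here. The two sums are genuinely different term-by-term. This is almost certainly a transcription discrepancy between the present paper and \cite{Tak11}; your version is the one that falls out of the recursion, and for the only use made of the lemma in this paper (Lemma~\ref{curvature2}, where one assumes $\|Df^n(v)\|\geq\delta\,\|Df^i(v)\|$ for all $i<n$) the two forms are interchangeable. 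So your proof is sound; just do not claim the final reindexing matches the stated formula verbatim.
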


\subsection{Lyapunov maximizing measure}\label{maximizing}
Recall that $\mathcal M_0(f)$ is the set of $f$-invariant Borel probability measures which gives total mass to the set $\Lambda$.
The next lemma states that $\delta_Q$ is the unique measure which maximizes the unstable Lyapunov exponent
among measures in $\mathcal M_0(f)$.
\begin{lemma}\label{maximal}
For any $\mu\in\mathcal M_0(f)\setminus\{\delta_Q\}$, $\lambda^u(\mu)<\lambda^u(\delta_Q)$.
\end{lemma}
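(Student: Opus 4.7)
The plan is to exploit the near-horizontal structure of $E^u$ on $\Lambda$ and the observation that the expansion rate $J^u$ on $\Omega$ is essentially controlled by $2a|x|$, which achieves its maximum on $\Omega$ at $Q$. By ergodic decomposition and affinity of $\lambda^u$ in $\mu$, it suffices to treat ergodic $\mu\in \mathcal M_0(f)\setminus\{\delta_Q\}$, and for such $\mu$, ergodicity together with $f$-invariance of $\{Q\}$ forces $\mu(\{Q\})=0$.

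The first key step is to establish the pointwise upper bound $\log J^u(z)\leq \log J^u(Q)$ on all of $\Lambda$. The backward-contraction condition \eqref{eu} defining $\Lambda$, combined with a standard cone argument for $Df^{-1}$ in the strongly dissipative H\'enon-like regime, places $E^u_z$ in a horizontal cone of slope $O(\sqrt b)$: any steeper direction would be expanded rather than contracted by $Df^{-1}$. With this near-horizontality, the explicit form of $Df_z$ yields $J^u(z)=2a|x_z|(1+O(\sqrt b))$, and a direct comparison with the explicit expanding eigenvalue at $Q$ gives $\log J^u(z)\leq \log J^u(Q)$ throughout $\Lambda$.

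To upgrade to strict integrated inequality, the next step is to exhibit, for every neighborhood $V$ of $Q$, a gap $\log J^u(z)\leq \log J^u(Q)-c(V)$ with $c(V)>0$ on $\Lambda\setminus V$. The geometric content is that $Q$ is the only dynamically recurrent point of maximal expansion in $\Omega$: although $\Omega$ may a priori contain points with $|x|$ near $|x_Q|$ other than $Q$, such points must lie on local invariant manifolds of $Q$, and since $f$ is a diffeomorphism with $f^{-1}(Q)=\{Q\}$ there is no spurious preimage cluster carrying invariant mass away from $Q$. With this gap in hand and $V$ chosen small enough that $\mu(\Lambda\setminus V)>0$,
\begin{equation*}
\lambda^u(\mu)\leq \mu(V)\log J^u(Q)+\mu(\Lambda\setminus V)(\log J^u(Q)-c(V))<\log J^u(Q)=\lambda^u(\delta_Q).
\end{equation*}

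The main obstacle will be the quantitative separation above: controlling how closely $|x_z|$ can approach $|x_Q|$ on $\Lambda$ away from $Q$, since Cantor-like parts of $\Lambda$ might accumulate near $Q$. If a direct geometric estimate proves awkward, a robust alternative is to argue by contradiction from the pointwise upper bound: equality $\lambda^u(\mu)=\lambda^u(\delta_Q)$ would force $\log J^u=\log J^u(Q)$ for $\mu$-almost every $z$, so $\mu$ would be concentrated on $\{z\in\Lambda:J^u(z)=J^u(Q)\}$, and one then identifies this level set with $\{Q\}$ among $f$-invariant supports, again using $f^{-1}(Q)=\{Q\}$ together with the local structure of $\Lambda$ at $Q$.
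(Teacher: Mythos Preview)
Your proposal has a genuine gap at its foundation: the pointwise bound $\log J^u(z)\le\log J^u(Q)$ for all $z\in\Lambda$ is asserted but not established, and both steps you offer toward it are problematic. First, the cone argument is incorrect. The backward-contraction condition \eqref{eu} is an asymptotic (limsup) statement, not a one-step statement, so it does not force $E^u_z$ into a horizontal cone at every point; concretely, for $z=f(z')$ with $z'$ in the critical region $I(\delta)$ and $E^u_{z'}$ near-horizontal, the folding makes $E^u_z$ nearly vertical, and the one-step backward expansion this entails is compensated later along the orbit. The paper's own proof explicitly treats the case ${\rm slope}(E^u)>1/b^{1/3}$, confirming that such points lie in $\Lambda$. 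Second---and this is the more serious obstacle---even granting the coarse bound $J^u(z)\le\|Df_z\|_{\rm op}\le 2a|x_z|+O(b)$ (which does hold regardless of the direction of $E^u_z$), the comparison with $J^u(Q)=2a|x_Q|+O(b)$ is delicate: $\Omega$ contains points near the stable side $\alpha_0^+$ with $x$-coordinate $\approx 1\approx|x_Q|$, so whether the pointwise bound holds there is decided by the signs of the $O(b)$ corrections, which depend on the unspecified remainder $\Phi$. Without the pointwise bound, both your direct integration and your ``robust alternative'' (which still needs $J^u\le J^u(Q)$ $\mu$-a.e.\ to force equality on a level set) collapse.

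The paper's proof avoids pointwise comparison altogether. After reducing to ergodic $\mu$, it splits into two cases. If ${\rm supp}(\mu)\cap I(\delta)=\emptyset$ the claim reduces, for small $b$, to the one-dimensional fact that every ergodic measure for $T_2$ other than $\delta_{-1}$ has exponent $\log 2<\log 4$. If $\mu(I(\delta))>0$, the paper takes a generic point, marks the successive returns $m_k$ to $I(\delta)$, and shows that over each block $[m_k,m_{k+1}]$ the derivative product is bounded by $e^{\lambda^u(\delta_Q)(m_{k+1}-m_k)}$ times a definite loss factor---either $3\delta$ (if the slope of $E^u$ stays controlled) or $b^{r_k/3}$ (if the slope exceeds $1/b^{1/3}$ for $r_k$ steps after the return). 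Since these losses occur with positive frequency, the Birkhoff average of $\log J^u$ is strictly below $\lambda^u(\delta_Q)$. The key point is that the argument bounds products over orbit segments rather than $J^u$ at individual points, so it is insensitive to the $O(b)$ ambiguities that undermine your approach.
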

\begin{proof}
From the linearity of the unstable Lyapunov exponent as a function of measures,
it suffices to consider the case where $\mu$ is ergodic.
Let $${\rm supp}(\mu)=\bigcap\{F\colon \text{$F$ is a closed subset of $\Omega$ and $\mu(F)=1$}\}.$$
Reducing $b>0$ if necessary,
one can show that $\lambda^u(\mu)<\lambda^u(\delta_Q)$ holds for any ergodic $\mu$
with ${\rm supp}(\mu)\cap I(\delta)=\emptyset$.
In the case ${\rm supp(\mu)}\cap I(\delta)\neq\emptyset$ we have $\mu(I(\delta))>0$. From the Ergodic Theorem, 
it is possible to take a point $z\in I(\delta)$ such that $$\displaystyle{\lim_{n\to\infty}}\frac{1}{n}\displaystyle{\sum_{i=0}^{n-1}}
\log \|Df|E^u_{f^i(z)}\|=\lambda^u(\mu)>0,$$
and
$$\lim_{n\to\infty}\frac{1}{n}\#\{i\in\{0,1,\ldots,n-1\}\colon f^i(z)\in I(\delta)\}=\mu(I(\delta))>0.$$
Define a sequence $ m_1\leq m_1+r_1\leq m_2\leq m_2+r_2\leq m_3\leq\cdots$ of nonnegative integers inductively as follows.
Start with $m_1=0$.
Let $k\geq1$ and $m_k$ be such that $f^{m_k}(z)\in I(\delta)$.
If ${\rm slope}(E^u_{f^{m_k+1}(z)})\leq 1/b^{1/3}$, then
define $$r_k=0\ \ \text{and} \ \ m_{k+1}=\min\{n>m_k\colon f^n(z)\in I(\delta)\}.$$
If ${\rm slope}(E^u_{f^{m_k+1}(z)})> 1/b^{1/3}$, then
define $$r_{k}=\min\{i>1\colon {\rm slope}(E^u_{f^{m_k+i}(z)})\leq 1/b^{1/3}\}\ \ \text{and} \ \ 
m_{k+1}=\min\{n\geq m_k+r_k\colon f^n(z)\in I(\delta)\}.$$
Since $\lambda^u(\mu)>0$, $r_k<\infty$.

The form of our map \eqref{henon} gives ${\rm slope}(E^u_{f^n(z)})\leq 1/b^{1/3}$ for every $n\in\{m_k+r_k,m_k+r_k+1,\ldots,m_{k+1}\}$.
This implies
$$\|Df^{m_{k+1}-m_k-r_k}|E^u_{f^{m_k+r_k}(z)}\|\leq 2e^{\lambda^u(\delta_Q)(m_{k+1}-m_k-r_k)}.$$
If ${\rm slope}(E^u_{f^n(z)})\geq 1/b^{1/3}$ then  $\|D_{f^n(z)}f|E^u_{f^n(z)}\|\leq \sqrt{b}$.
Hence
\begin{equation}\label{equa1}
\begin{split}\|Df^{m_{k+1}-m_k}|E^u_{f^{m_k}(z)}\| &\leq 2e^{\lambda^u(\delta_Q)(m_{k+1}-m_k-r_k)}\min\{b^{\frac{r_k}{2}},3\delta\}\\
& \leq e^{\lambda^u(\delta_Q)(m_{k+1}-m_k)}\min\{b^{\frac{r_k}{3}},3\delta\}.\end{split}
\end{equation}

For each integer $n\geq0$ define
$V_n=\{i\in\{0,1,\ldots,n-1\} \colon {\rm slope}(E^u_{f^i(z)})\geq 1/b^{1/3}\}.$
If $\displaystyle{\limsup_{n\to\infty}\#V_n/n>0}$, then
the first alternative in \eqref{equa1} yields
\begin{align*}\frac{1}{m_{k+1}}\log \|Df^{m_{k+1}}|E^u_{z}\|&\leq \lambda^u(\delta_Q)+\frac{1}{3}\log b\cdot
\frac{1}{m_{k+1}}\sum_{i=1}^kr_i\\
&= \lambda^u(\delta_Q)+\frac{1}{3}\log b\cdot\frac{1}{m_{k+1}}\#V_{m_{k+1}}.\end{align*}
Taking the upper limit as $k\to\infty$ yields
$\lambda^u(\mu)<\lambda^u(\delta_Q)$.
If $\displaystyle{\limsup_{n\to\infty}\#V_n/n=0}$, then
note that $k=\{n\in\{0,1,\ldots,m_{k+1}-1\}\colon f^n(z)\in I(\delta)\text{ and }{\rm slope}(E^u_{f^n(z)})\leq 1/b^{\frac{1}{3}}\}$
and
$k/m_{k+1}\to\mu(I(\delta))>0$ as $k\to\infty$.
Then the second alternative in \eqref{equa1} yields
\begin{align*}\frac{1}{m_{k+1}}\log \|Df^{m_{k+1}}|E^u_{z}\|&\leq \lambda^u(\delta_Q)+\log \delta\cdot \frac{k}{m_{k+1}}.\end{align*}
Taking the upper limit as $k\to\infty$ yields
$\lambda^u(\mu)<\lambda^u(\delta_Q)$.
\end{proof}

\subsection{$C^1$-closeness due to disjointness}\label{closeness}
 Corollary \ref{c2cor} below states that
the pointwise convergence of pairwise disjoint $C^2(b)$-curves implies
the $C^1$-convergence. This fact was already used in the precious works 
for H\'enon-like maps, e.g., \cite{SenTak1,SenTak2}.
We include precise statements and proofs for the reader's convenience.

\begin{lemma}\label{c2b}
Let $\varepsilon\in(0,(1+\sqrt{b})^{-2})$ and
let $\gamma_1$, $\gamma_2$ be two disjoint $C^2(b)$-curves parametrized by arc length such that:

\begin{itemize}
\item[(i)] $\gamma_1(s)$, $\gamma_2(s)$ are defined for $s\in[-\varepsilon,\varepsilon]$;
\item[(ii)] $|\gamma_1(0)-\gamma_2(0)|\leq \varepsilon^{2}.$
\end{itemize}

Then the following holds:
\begin{itemize}
\item[(a)] ${\rm angle}(T_{\gamma_1(0)}\gamma_1,T_{\gamma_2(0)}\gamma_2)\leq \sqrt{\varepsilon}$;
\item[(b)] $|\gamma_1(s)-\gamma_2(s)|\leq 2\varepsilon^{\frac{3}{2}}$ for all $s\in[-\varepsilon,\varepsilon]$.
\end{itemize}
\end{lemma}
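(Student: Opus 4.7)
The plan is to write each $\gamma_i$ as a graph $y=\phi_i(x)$ over a short $x$-interval about $x_i := \pi_1(\gamma_i(0))$, which is possible since the $C^2(b)$ hypothesis forces $|\phi_i'|\leq\sqrt b$ (so curves are nowhere vertical) and $|\phi_i''|\leq C\sqrt b$ (from the curvature bound and $|\phi_i'|\leq\sqrt b$). The hypothesis $|\gamma_1(0)-\gamma_2(0)|\leq\varepsilon^2$ gives $|x_1-x_2|\leq\varepsilon^2$ and $|\phi_2(x_2)-\phi_1(x_1)|\leq\varepsilon^2$, while the arclength range $[-\varepsilon,\varepsilon]$ corresponds to an $x$-range of length at least $\varepsilon/\sqrt{1+b}$ about each $x_i$. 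Because $\varepsilon<(1+\sqrt b)^{-2}$, the two $x$-ranges overlap on an interval $J$ containing $x_1$ and extending roughly $\varepsilon$ on each side. On $J$ the function $h(x):=\phi_2(x)-\phi_1(x)$ never vanishes by disjointness, so $h$ has constant sign; a one-step mean value estimate using $|x_1-x_2|\leq\varepsilon^2$ and $|\phi_2'|\leq\sqrt b$ gives $|h(x_1)|\leq(1+\sqrt b)\,\varepsilon^2$.

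For (a), note that the angle between $T_{\gamma_1(0)}\gamma_1$ and $T_{\gamma_2(0)}\gamma_2$ equals $|\arctan\phi_1'(x_1)-\arctan\phi_2'(x_2)|$, and the bound $|\phi_2''|\leq C\sqrt b$ gives $|\phi_2'(x_2)-\phi_2'(x_1)|\leq C\sqrt b\,\varepsilon^2$, so the angle differs from $|h'(x_1)|$ by at most $C\sqrt b\,\varepsilon^2$. Argue by contradiction: if $|h'(x_1)|>\sqrt\varepsilon$, then $|h''|\leq 2C\sqrt b$ keeps $|h'|\geq\sqrt\varepsilon/2$ throughout $J$. Integrating, $h$ varies across $J$ by an amount of order $\varepsilon^{3/2}$, which dominates the initial size $|h(x_1)|\leq(1+\sqrt b)\varepsilon^2$. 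Thus $h$ takes values of opposite sign at the two endpoints of $J$, contradicting the constant-sign property.

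For (b), let $\alpha_i(u)$ be the angle that $\gamma_i'(u)$ makes with the horizontal. Arclength parametrization gives $|\alpha_i'(u)|$ equal to the curvature, hence bounded by $\sqrt b$, so $|\alpha_i(u)-\alpha_i(0)|\leq\sqrt b\,|u|$. Combined with (a),
$$|\gamma_1'(u)-\gamma_2'(u)|\leq|\alpha_1(u)-\alpha_2(u)|\leq\sqrt\varepsilon+2\sqrt b\,|u|.$$
Integrating and using hypothesis (ii),
$$|\gamma_1(s)-\gamma_2(s)|\leq\varepsilon^2+\int_0^{|s|}\bigl(\sqrt\varepsilon+2\sqrt b\,u\bigr)\,du\leq(1+\sqrt b)\,\varepsilon^2+\varepsilon^{3/2}$$
for $|s|\leq\varepsilon$. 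The threshold $\varepsilon<(1+\sqrt b)^{-2}$ is calibrated exactly so that $(1+\sqrt b)\sqrt\varepsilon<1$, i.e.\ $(1+\sqrt b)\varepsilon^2<\varepsilon^{3/2}$, producing the clean bound $2\varepsilon^{3/2}$.

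The delicate step is (a): one must convert the purely metric $O(\varepsilon^2)$ closeness together with disjointness into a bound on a tangent-angle at arclength parameter zero, even though the two base points sit at slightly offset $x$-coordinates. The sign-change argument via $h$ is the natural mechanism, but one has to verify that the small scales $(1+\sqrt b)\varepsilon^2$, $\varepsilon^{3/2}$, and the length of the common $x$-interval $J$ are compatible; it is precisely the hypothesis $\varepsilon<(1+\sqrt b)^{-2}$ that makes everything fit, both for the contradiction in (a) and for the final constant $2$ in (b).
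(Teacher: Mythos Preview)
Your argument is correct and rests on the same mechanism as the paper's: assume the tangent directions differ by more than $\sqrt{\varepsilon}$, use the second-order (curvature) bound to show the difference of the two curves must change sign across an interval of length $\approx\varepsilon$, and derive an intersection contradicting disjointness. The paper carries this out directly in arclength, setting $L(s)=\gamma_1(s)-\gamma_2(s)$ and expanding
\[
L(s)=L(0)+\dot L(0)\,s+\int_0^s \ddot L(\theta(t))\,t\,dt,
\]
then comparing $|L(0)|+\sqrt{b}\,\varepsilon^2$ against $\|\dot L(0)\|\varepsilon$ to force a sign change of $L$ between $s=\pm\varepsilon$. You instead reparametrize both curves as graphs $y=\phi_i(x)$ and run the contradiction on the scalar $h=\phi_2-\phi_1$. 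Your route costs some extra bookkeeping (estimating the overlap interval $J$, handling the offset $|x_1-x_2|\le\varepsilon^2$, and the $\arctan$ correction), but buys a cleaner intermediate-value step, since $h$ is scalar and $h=0$ is literally an intersection; in the paper's vector formulation one has to read ``sign of $L$'' through the near-horizontality of the curves. For part~(b) the two arguments are again parallel: the paper reuses the same Taylor formula for $L(s)$ together with the bound $\|\dot L(0)\|\le\tfrac12\sqrt{\varepsilon}$ just obtained, while you integrate the angle bound $|\alpha_1(u)-\alpha_2(u)|\le\sqrt\varepsilon+2\sqrt b\,|u|$. Both arrive at $(1+\sqrt b)\varepsilon^2+\varepsilon^{3/2}$ and use $\varepsilon<(1+\sqrt b)^{-2}$ to absorb the first term into the second.
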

\begin{proof}
Write $L(s)=\gamma_1(s)-\gamma_2(s)$. 
By the mean value theorem, for any $t$ in between $0$ and $s$ there exists 
 $\theta(t)$ in between $0$ and $t$ such that
$\dot {L}(t)=\dot{L}(0)+\ddot{L}(\theta(t))t$, 
where the dot $``\cdot"$ denotes the $t$-derivative.
Integrating this equality gives
\begin{equation}\label{c2}L(s)=L(0)+
\int_0^s\dot {L}(t)dt=L(0)+\dot L(0)s+\int_0^s\ddot{L}(\theta(t))tdt.\end{equation}
We argue by contradiction assuming
 $\|\dot{L}(0)\|> (1/2)\sqrt{\varepsilon}$. The assumption $\varepsilon<(1+\sqrt{b})^{-2}$, (ii) and $|\ddot{L}|\leq 2\sqrt{b}$ give
 \begin{equation}\label{c3}\left|L(0)+\int_0^{\varepsilon}\ddot{L}(\theta(t))tdt\right|
 \leq|L(0)|+2\sqrt{b}\varepsilon^2\leq
   \varepsilon^{2}+2\sqrt{b}\varepsilon^{2}<\|\dot{L}(0)\|
    \varepsilon.\end{equation}
A  comparison of \eqref{c2} with \eqref{c3} 
 shows that the sign of $L(\varepsilon)$ coincides with that of $\|\dot L(0)\|\varepsilon$.
The same argument shows that the sign of $L(-\varepsilon)$ coincides with that of $-\|\dot L(0)\|\varepsilon$.
From the intermediate value theorem it follows that $L(s)=0$ for some $s$, namely
$\gamma_1$ intersects $\gamma_2$, a contradiction.
Hence  $     {\rm angle}(T_{\gamma_1(0)}\gamma_1,T_{\gamma_2(0)}\gamma_2)          <2\|\dot{L}(0)\|\leq \sqrt{\varepsilon}$
and (a) holds.
(b) follows from \eqref{c2}, (ii), (a) and  $|\ddot{L}|\leq 2\sqrt{b}$.
\end{proof}
\begin{cor}\label{c2cor}
Let  $\{\gamma_n\}_{n=0}^{+\infty}$ be a sequence of pairwise disjoint $C^2(b)$-curves
which as a sequence of $C^2$ functions converges pointwise to a function $\gamma$ as $n\to+\infty$. Then
the graph of
$\gamma$ is a $C^1$-curve and the slopes of its tangent directions are everywhere $\leq\sqrt{b}$.
\end{cor}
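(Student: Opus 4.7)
The plan is to apply Lemma \ref{c2b} pairwise within the sequence $\{\gamma_n\}$, converting the disjointness plus pointwise closeness of base points into Cauchyness of the unit tangent vectors, and then to exploit the uniform curvature bound to upgrade to a $C^1$ limit.

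First, I would fix an interior parameter value $s_0$ in the common arc-length domain of the $\gamma_n$. The pointwise convergence $\gamma_n(s_0)\to\gamma(s_0)$ implies that $\{\gamma_n(s_0)\}$ is Cauchy, so for any small $\varepsilon\in(0,(1+\sqrt{b})^{-2})$ and all sufficiently large $n,m$ one has $|\gamma_n(s_0)-\gamma_m(s_0)|\leq\varepsilon^{2}$. Translating the parameter so that $s_0$ corresponds to $0$, and restricting first to $s_0$ at distance $\geq\varepsilon$ from the endpoints of the parameter interval, the disjoint pair $(\gamma_n,\gamma_m)$ satisfies hypotheses (i), (ii) of Lemma \ref{c2b}. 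Conclusion (a) yields
\[
{\rm angle}\bigl(T_{\gamma_n(s_0)}\gamma_n,T_{\gamma_m(s_0)}\gamma_m\bigr)\leq\sqrt{\varepsilon},
\]
so the sequence $\{\dot\gamma_n(s_0)\}$ of unit tangent vectors is Cauchy and converges to a unit vector $T(s_0)$ whose slope is at most $\sqrt{b}$, since this slope bound is a closed condition.

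Next I would upgrade this pointwise convergence of derivatives to a uniform one. Because each $\gamma_n$ is parametrized by arc length and is a $C^2(b)$-curve, its curvature bound gives $|\ddot\gamma_n|\leq\sqrt{b}$ uniformly in $n$; hence $\{\dot\gamma_n\}$ is uniformly Lipschitz, hence equicontinuous. Combined with the pointwise Cauchy property just established, Arzel\`a--Ascoli yields uniform convergence of $\dot\gamma_n$ to a continuous vector field $T$ on compact subsets of the interior of the domain. Since $\gamma_n\to\gamma$ pointwise and $\dot\gamma_n\to T$ uniformly, the fundamental theorem of calculus gives that $\gamma$ is $C^1$ on the interior with $\dot\gamma=T$, and the slope bound passes to the limit. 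Boundary-adjacent parameters are then absorbed by the uniform Lipschitz continuity of $\{\dot\gamma_n\}$.

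The main obstacle---and indeed the decisive point---is the role of disjointness: without it, Lemma \ref{c2b}(a) fails, and pointwise convergence of $C^2(b)$-curves conveys no information about tangent directions. The entire argument hinges on the quantitative $C^2$-to-$C^1$ self-improvement encoded in that lemma, which trades a base-point separation of order $\varepsilon^{2}$ for a tangent-angle bound of order $\sqrt{\varepsilon}$. Once this is in hand, the remaining steps are routine Arzel\`a--Ascoli bookkeeping.
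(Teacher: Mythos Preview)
Your argument is correct and follows the same core idea as the paper: Lemma~\ref{c2b} is the engine that converts pointwise closeness of disjoint $C^2(b)$-curves into closeness of tangent directions. The only minor difference is in how uniformity is obtained: the paper first invokes part~(b) of Lemma~\ref{c2b} to pass from pointwise to uniform $C^0$ convergence, and then applies part~(a) at every point to get uniform $C^1$ convergence directly, whereas you apply part~(a) pointwise and then use the curvature bound plus Arzel\`a--Ascoli to upgrade to uniform convergence of the derivatives; both routes are equally valid and equally short.
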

\begin{proof}
From Lemma \ref{c2b}(b),
the pointwise convergence implies the uniform $C^0$ convergence. 
From Lemma \ref{c2b}(a), the uniform $C^1$ convergence follows.
\end{proof}

\begin{figure}
\begin{center}
\includegraphics[height=4cm,width=12cm]
{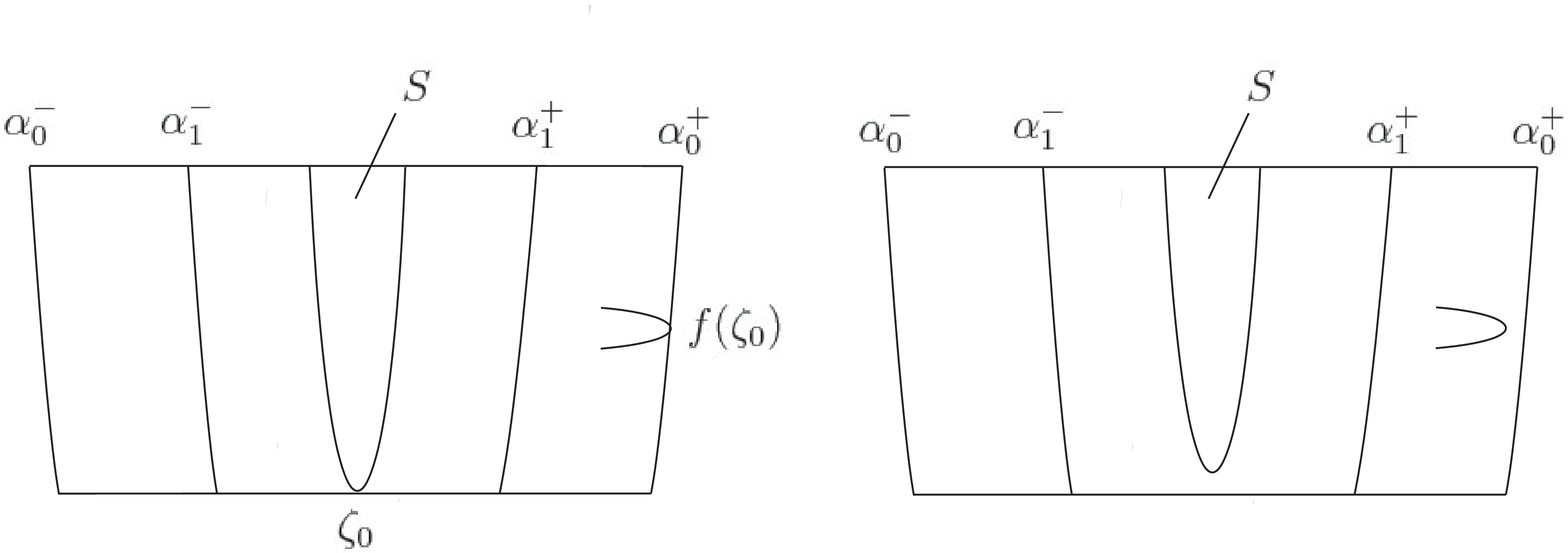}
\caption{The lenticular domain $S$: $a=a^*$ (left); $a<a^*$ (right).}
\end{center}
\end{figure}

\subsection{Critical points}\label{critical}
From the hyperbolicity of the fixed saddle $Q$,
there exist mutually disjoint connected open sets $U^-$, $U^+$ independent of $b$ such that
$\alpha_0^-\subset U^-$, $\alpha_0^+ \subset U^+$, $U^+\cap f(U^+)=\emptyset=U^+\cap f(U^-)$ and 
a foliation $\mathcal F^s$ of $U=U^-\cup U^+$ by one-dimensional vertical leaves such
that:
\begin{itemize}
\item[(a)] $\mathcal F^s(Q)$, the leaf of $\mathcal F^s$ containing $Q$,
contains $\alpha_0^-$; 
\item[(b)] if $z,f(z)\in U$, then $f(\mathcal F^s(z))
\subset\mathcal F^s(f(z))$;

\item[(c)] let $e^s(z)$ denote the unit vector in $T_z\mathcal F^s(z)$ whose second component is positive. 
Then $z\mapsto e^s(z)$ is $C^{1}$, $\|D_zfe^s(z)\|\leq Cb$ and $\|D_ze^s(z)\|\leq C$;

\item[(d)] if $z,f(z)\in U$, then 
${\rm slope}(e^s(z))\geq
C/\sqrt{b}$.
\end{itemize}

{\rm Let $\gamma$ be a $C^2(b)$-curve in $I(\delta)$.
We say $\zeta\in\gamma$ is a {\it critical point on} $\gamma$ if
$\zeta\in S$ and
$f(\gamma)$ is tangent to $\mathcal F^s(f(\zeta))$.}
If $\zeta$ is a critical point on a $C^2(b)$-curve $\gamma$, then
we say $\gamma$ {\it admits} $\zeta$. For simplicity, we sometimes refer to $\zeta$ as a {\it critical point} without referring to $\gamma$. 

Let $\zeta$ be a critical point. 
Note that $f(\zeta)\in U^+$, and the forward orbit of $f(\zeta)$ spends a long time in $U^-$. Hence
it inherits the exponential growth of derivatives near the fixed saddle $Q$.
For an integer $i\geq1$ write
$w_i(\zeta)=D_{f(\zeta)}f^{i-1}\left(\begin{smallmatrix}1\\0\end{smallmatrix}\right)$, and
 define
$$n(\zeta)=\sup\left(\{i>1\colon f^i(\zeta)\in U^-\}\cup\{+\infty\}\right).$$
Then
\begin{equation}\label{eq-1}
3^{i-1}<\|w_i(\zeta)\|<5^{i-1}\ \ \text{for every }i\in\{1,2,\ldots, n(\zeta)\}.
\end{equation}
More precisely, from the bounded distortion near the fixed saddle $Q$,
\begin{equation}\label{eq-2}
\|w_i(\zeta)\|\asymp e^{\lambda^u(\delta_Q)(i-1)}\ \ \text{for every }i\in\{1,2,\ldots, n(\zeta)\}.
\end{equation}

 \subsection{Binding to critical points}\label{bindarg}
 In order to deal with the effect of returns to $I(\delta)$,
 we now establish a binding argument in the spirit of Benedicks $\&$ Carleson \cite{BenCar91} which allows one to bind generic orbits which fall inside $I(\delta)$
to suitable critical points, to let it copy the exponential growth along the piece of the critical orbit.

Let $\zeta$ be a critical point and let
 $z\in I(\delta)\setminus\{\zeta\}$.
We define a {\it bound period} $p=p(\zeta,z)$ in the following manner.
Consider the leaf $\mathcal F^s(f(\zeta))$ of the stable foliation through $f(\zeta)$.
This leaf is expressed as a graph of a $C^2$ function: there exists an open interval
$J$ containing $0$ and independent of $b$, and a $C^2$ function $y\mapsto x^s(y)$ on $J$
such that
$$\mathcal F^s(f(\zeta))=\{(x^s(y),y)\colon y\in J\}.$$
Choose a small number $\tau>0$ such that any closed ball of radius $\sqrt{\tau}$ about a point in $\alpha_0^-$ is contained in $U^-$.
For each integer $k\geq1$ define
$$D_k(\zeta)=\tau\left[\sum_{i=1}^k\frac{\|w_{i}(\zeta)\|^2}{\|w_{i+1}(\zeta)\|}\right]^{-1}.$$
Write $f(z)=(x_0,y_0)$.
If  $|x_0-x^s(y_0)|\leq D_{n(\zeta)}(\zeta)$, then define $p=n(\zeta)+1$.
Otherwise, define $p$ to be the unique integer in $\{2,3,\ldots,n(\zeta)\}$ that satisfies 
$D_p(\zeta)<|x_0-x^s(y_0)|\leq D_{p-1}(\zeta)$.

Let $S$ denote the compact lenticular domain bounded by the parabola in $W^s(Q)$ and one of the unstable 
sides of $R$ (See FIGURE 5). 
Note that $f(S)\subset U^+$.

 \begin{prop}\label{binding}
 Let $\gamma$ be a $C^2(b)$-curve in $I(\delta)$ and $\zeta$ a critical point on $\gamma$.
 Let $z\in \gamma\setminus\{\zeta\}$ and $p=p(\zeta,z)$.
  Then the following holds.

\begin{itemize}

\item[(I)] If $p\leq n(\zeta)$, then:

\begin{itemize}

\item[(a)] $p\approx-\log|\zeta-z|$;

\item[(b)] $f^i(z)\in U$ for every $i\in\{1,2,\ldots,p-1\}$;


\item[(c)] let $v$ denote any nonzero vector tangent to $\gamma$ at $z$. Then 
$\|D_zf^p(v)\|\geq e^{\lambda_0(p-i)} \|D_zf^i(v)\|$ for every $i\in\{0,1,\ldots,p-1\}$.
In particular, ${\rm slope}(D_zf^p(v))\leq\sqrt{b}$;

\end{itemize}

\item[(II)] If $p=n(\zeta)+1$, then $f^{n(\zeta)}(z)\notin R$.

\end{itemize}
\end{prop}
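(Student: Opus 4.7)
The strategy is a binding argument in the spirit of Benedicks and Carleson: the quadratic tangency defining $\zeta$ as a critical point forces $f(z)$ to sit close to the leaf $\mathcal F^s(f(\zeta))$, and the $f$-invariance of the stable foliation (property (b)) propagates this closeness along the orbit until the horizontal separation becomes macroscopic, at a moment engineered to coincide with the definition of $p$.

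\emph{Initial closeness and (a).} Since $\gamma$ is $C^2(b)$ and $\zeta$ is critical, the image $f(\gamma)$ is tangent to $\mathcal F^s(f(\zeta))$ at $f(\zeta)$ with a contact of order two; together with property (d) (stable leaves are nearly vertical) this yields $d_0 := |x_0-x^s(y_0)| \asymp |\zeta-z|^2$, where $f(z)=(x_0,y_0)$. On the other hand, \eqref{eq-2} combined with a telescoping of the geometric sum in the definition of $D_k(\zeta)$ gives $D_k(\zeta) \asymp \tau e^{-\lambda^u(\delta_Q) k}$. The defining inequality $D_p(\zeta) < d_0 \leq D_{p-1}(\zeta)$ then produces $p \approx -\log|\zeta-z|$.

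\emph{Shadowing during the bound period and (b).} Using (b) of $\mathcal F^s$, compare $f^i(f(z))$ with the point on $\mathcal F^s(f^{i+1}(\zeta))$ at the same vertical height. A first-order expansion in $d_0$ produces a linear displacement $\|w_{i+1}(\zeta)\|\,d_0$, while the quadratic self-interaction contributes at most $C\,\|w_i(\zeta)\|^2 d_0^2/\|w_{i+1}(\zeta)\|$; telescoping bounds the cumulative horizontal separation after $i$ steps by $\tau\, d_0/D_i(\zeta)$. For $i \leq p-1$ this is $\leq\tau$ and so stays inside a ball of radius $\sqrt{\tau}$ around $\alpha_0^-$, hence inside $U^-$, which gives (b). For (c), once $|f^j(z)-f^j(\zeta)|\leq C\tau$ for $1\leq j\leq p$, bounded distortion yields $\|D_zf^j(v)\|\asymp \|v\|\cdot\|w_j(\zeta)\|$, and \eqref{eq-2} gives growth at the rate $\lambda^u(\delta_Q)$. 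Since $\lambda_0 < \lambda^u(\delta_Q)$ with room to spare, this absorbs all multiplicative constants over any interval and produces $\|D_zf^p(v)\|\geq e^{\lambda_0(p-i)}\|D_zf^i(v)\|$. The slope estimate follows from property (d): vectors iterated in $U^-$ are pushed toward the nearly horizontal unstable direction at $Q$, so after the bound period the resulting slope is $\leq \sqrt{b}$.

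\emph{Case (II).} If $p = n(\zeta)+1$, then $d_0 \leq D_{n(\zeta)}(\zeta)$ and the shadowing of Step~2 extends over all of $1\leq i\leq n(\zeta)$. The iterate $f^{n(\zeta)+1}(\zeta)$ is, by definition of $n(\zeta)$, the first to exit $U^-$; but the geometry of the first bifurcation for $a\in(a^{**},a^*]$ (encoded in FIGURE~1 and the fact that $\alpha_0^-$ is a stable side of $R$) forces the shadow of this exit at step $n(\zeta)$ to cross an unstable side of $R$, so $f^{n(\zeta)}(z)\notin R$.

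The principal obstacle lies in the bookkeeping of Step~2: the horizontal separation at step $i$ competes a linearly amplified $d_0$ against a quadratic self-interaction of order $\|w_i\|^2 d_0^2/\|w_{i+1}\|$, and the telescoping must close up tightly enough that the cumulative error stays below the size of $U^-$ precisely on $i\leq p-1$ and not one step further. The subtle point is that this is exactly why the sum $\sum_{j}\|w_j\|^2/\|w_{j+1}\|$ appears in the definition of $D_k$. The choice of the constants $\tau$, $\delta$, $b$ must then be calibrated so that the nonlinear corrections remain subdominant and the bounded-distortion constants collapse uniformly in $i$.
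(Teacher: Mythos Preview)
Your argument for (I)(c) has a genuine gap: the claim that bounded distortion gives $\|D_zf^j(v)\|\asymp\|v\|\cdot\|w_j(\zeta)\|$ is false. Recall that $w_j(\zeta)=D_{f(\zeta)}f^{j-1}\left(\begin{smallmatrix}1\\0\end{smallmatrix}\right)$ tracks the \emph{horizontal} direction from $f(\zeta)$, but $D_zf(v)$ is \emph{not} nearly horizontal. Because $f(\gamma)$ is tangent to $\mathcal F^s(f(\zeta))$ at $f(\zeta)$, splitting $D_zf(v)=A_0\left(\begin{smallmatrix}1\\0\end{smallmatrix}\right)+B_0\,e^s(f(z))$ gives $|A_0|\asymp 2|\zeta-z|$: the horizontal component is killed to first order. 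Bounded distortion along the shadow then yields only $\|D_zf^j(v)\|\asymp 2|\zeta-z|\cdot\|w_j(\zeta)\|\cdot\|v\|$, and even this is valid only once the stable component $B_0\,D_{f(z)}f^{j-1}e^s(f(z))$, which decays like $(Cb)^{j-1}$, has become negligible compared to the horizontal part. The paper handles this by introducing a \emph{fold period} $q=q(\zeta,z)$ with $q\approx -\beta\log|\zeta-z|$ (where $\beta=-1/\log b$): for $j<q$ one actually has $\|D_zf^j(v)\|<\|v\|$, and the asymptotic above holds only for $j\geq q$. The net growth over $[0,p]$ is then $\|D_zf^p(v)\|\approx\tau^{3/2}e^{(p/2)\lambda^u(\delta_Q)}\|v\|$, i.e.\ rate roughly $\tfrac12\lambda^u(\delta_Q)\approx\log 2$, not $\lambda^u(\delta_Q)$; this barely exceeds $\lambda_0=\tfrac{99}{100}\log 2$, so there is no room to spare. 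The three regimes $i=0$, $1\leq i<q$, $q\leq i<p$ must each be treated separately to obtain the inequality in (c).

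Two smaller points. Your explanation of why the sum $\sum_j\|w_j\|^2/\|w_{j+1}\|$ appears in $D_k$ via a ``quadratic self-interaction'' is not what the paper does: the paper simply tracks the horizontal segment $l$ from $f(z)$ to $(x^s(y_0),y_0)$ and uses $\mathrm{length}(f^i(l))\asymp d_0\|w_{i+1}(\zeta)\|$, with the particular form of $D_k$ chosen so that $D_k\|w_{k+1}\|\approx\tau$. And in (II), the exit is to the \emph{left} of the stable side $\alpha_0^-$ (because $\zeta\in S$), not across an unstable side.
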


A proof of Proposition \ref{binding} is lengthy. Before entering it we give a couple of remarks and prove one lemma which will be used later.

\begin{remark}\label{admissible}
{\rm Let $z\in I(\delta)\cap\Lambda$ and suppose that ${\rm slope}(E_z^u)\leq\sqrt{b}$. 
We claim that if there exists a $C^2(b)$-curve which is tangent to $E_z^u$ and 
contains a critical point $\zeta$, then $p(\zeta,z)\leq n(\zeta)$ holds.
For otherwise $p(\zeta,z)=n(\zeta)+1$, and Proposition \ref{binding}(II) gives $f^{n(\zeta)}(z)\notin R$.
Since $z\in\Lambda\subset\Omega\subset R$ a contradiction arises.}
\end{remark}

\begin{remark}\label{admit}
{\rm As a by-product of the proof of Proposition \ref{binding} it follows that
any $C^2(b)$-curve in $I(\delta)$ 
admits at most one critical point.} 
\end{remark}

Let $\alpha_1^+$ denote the connected component of $R\cap W^s(P)$ containing $P$, and
 $\alpha_1^-$ the connected component of $W^s(P)\cap f^{-1}(\alpha_1^+)$ which is not $\alpha_1^+$.
Let $\Theta$ denote the rectangle bordered by $\alpha_1^+$, $\alpha_1^-$ and the unstable sides of $R$. 
Note that $$S\subset I(\delta)\subset \Theta.$$

\begin{lemma}\label{curvature2}
Let $\gamma$ be a $C^2(b)$-curve in $I(\delta)$ which admits a critical point.
If $n\geq1$ is such that ${\rm Int}(\Theta)\cap f^i(\gamma)=\emptyset$ for every $i\in\{0,1,\ldots,n-1\}$ and 
${\rm Int}(\Theta)\cap f^n(\gamma)\neq\emptyset$, 
then any connected component of $\Theta\cap f^n(\gamma)$ is a $C^2(b)$-curve.
\end{lemma}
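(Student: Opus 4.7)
The approach is to pull back the component $\gamma_n$ via $f^n$ to a subarc $\tilde\gamma\subset\gamma$, establish that at every $z\in\tilde\gamma$ the bound period $p(\zeta,z)$ is at most $n$, then combine Proposition \ref{binding}(c) with Lemma \ref{expand} to obtain uniform backward contraction from $f^n(z)$, which feeds into Lemma \ref{curvature} to control the curvature of $\gamma_n$.

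Let $\zeta\in\gamma$ be the unique critical point (Remark \ref{admit}), and set $\tilde\gamma=(f^n|_\gamma)^{-1}(\gamma_n)$, a connected subarc of $\gamma$ since $f^n$ restricts to a diffeomorphism on $\gamma$. By Proposition \ref{binding}(I-a), $p(\zeta,z)\to\infty$ as $z\to\zeta$, and throughout the bound period the orbit lies in $U$, which is disjoint from $\Theta$; hence a neighborhood of $\zeta$ in $\gamma$ is mapped by $f^n$ outside $\Theta$, so $\tilde\gamma$ is bounded away from $\zeta$ and lies entirely on one side of it. For $z\in\tilde\gamma$ write $p=p(\zeta,z)$. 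The hypothesis ${\rm Int}(\Theta)\cap f^i(\gamma)=\emptyset$ for $1\le i\le n-1$, together with $I(\delta)\subset{\rm Int}(\Theta)$, yields $f^i(z)\notin I(\delta)$ for such $i$; combined with Proposition \ref{binding}(I-b), which places $f^i(z)\in U\subset\Theta^c$ for $1\le i\le p-1$, and with the separate verification that case (II) of Proposition \ref{binding} cannot contribute to $\gamma_n$ (escape at time $n(\zeta)$ is incompatible with re-entering $\Theta$ at time $n$, by the location of the stable-manifold boundary pieces $\alpha_1^\pm$), we conclude $p\le n$.

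Fix $v\in T_z\gamma\setminus\{0\}$. Proposition \ref{binding}(I-c) gives ${\rm slope}(D_zf^p(v))\le\sqrt b$ and $\|D_zf^p(v)\|\ge e^{\lambda_0(p-j)}\|D_zf^j(v)\|$ for $0\le j\le p$, while Lemma \ref{expand} applied from $f^p(z)$ (using $f^i(z)\notin I(\delta)$ for $p\le i\le n-1$) yields ${\rm slope}(D_zf^n(v))\le\sqrt b$ and $\|D_zf^n(v)\|\ge\delta e^{\lambda_0(n-p)}\|D_zf^p(v)\|$. Chaining these estimates gives the uniform backward contraction
\[\frac{\|D_zf^j(v)\|}{\|D_zf^n(v)\|}\le\delta^{-1}e^{-\lambda_0(n-j)}\qquad (0\le j\le n).\]
Substituted into Lemma \ref{curvature} at $i=n$ with $\kappa_0(z)\le\sqrt b$, this yields
\[\kappa_n(z)\le (Cb)^n\delta^{-3}\sqrt b+\delta^{-3}e^{-3\lambda_0 n}\sum_{j=1}^n(Cbe^{3\lambda_0})^j\le 2C\delta^{-3}b,\]
where the geometric series converges because $b$ is chosen much smaller than $\delta$, so $Cbe^{3\lambda_0}<1/2$. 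Since $b\ll\delta^6$, the right-hand side is $\le\sqrt b$; combined with the slope bound from the previous step, this shows $\gamma_n$ is a $C^2(b)$-curve.

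The main obstacle is the combinatorial step of forcing $p(\zeta,z)\le n$ throughout $\tilde\gamma$; the delicate part is excluding case (II) of Proposition \ref{binding}, i.e., showing that orbits which escape $R$ at time $n(\zeta)$ cannot contribute a component of $\Theta\cap f^n(\gamma)$. Once this is settled, the remainder is a uniform combination of Proposition \ref{binding}(c), Lemma \ref{expand}, and Lemma \ref{curvature}, insensitive to where the transition from bound to non-critical dynamics occurs because the geometric series in the curvature estimate is dominated by terms with $j$ close to $n$.
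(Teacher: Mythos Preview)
Your proof is correct and follows essentially the same approach as the paper's (two-sentence) argument: reduce via Lemma~\ref{curvature} to the backward contraction estimate $\|Df^n(v)\|\ge\delta\|Df^i(v)\|$, then obtain this by concatenating Proposition~\ref{binding}(I)(c) on $[0,p]$ with Lemma~\ref{expand} on $[p,n]$; you simply fill in the details the paper leaves implicit. One small slip in your exclusion of case~(II): the relevant stable boundary pieces are $\alpha_0^\pm\subset W^s(Q)$ (the stable sides of $R$), not $\alpha_1^\pm$; the point is that $f^{n(\zeta)}(z)$ lies to the left of $\alpha_0^-$, and orbits leaving $R$ across $\alpha_0^-$ never re-enter $R$, hence never $\Theta$.
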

\begin{proof}
By Lemma \ref{curvature} it suffices to show that for any $z\in\gamma$
with $f^n(z)\in\Theta$ and a nonzero vector $v$ tangent to $\gamma$ at $z$,
$\|Df^n(v)\|\geq\delta\|Df^i(v)\|$ holds for every $i\in\{0,1,\ldots,n-1\}$. 
This is a consequence of Lemma \ref{expand} and Proposition \ref{binding}(I)(c).
\end{proof}

\begin{proof}[Proof of Proposition \ref{binding}]
We start with establishing three preliminary estimates.
\medskip

\noindent{\it Estimate 1 (horizontal distance).}
Let $\mathcal F^s(f(\zeta))=\{(x^s(y),y)\}$ denote the leaf of the foliation through $f(\zeta)$ as in Sect.\ref{critical}.
Write $f(z)=(x_0,y_0)$. In the first step we estimate $|x_0-x^s(y_0)|$.

Write $f(\zeta)=(x^s(y_1),y_1)$ and
$e^s(f(\zeta))=
\left(\begin{smallmatrix}
\cos\theta(\zeta)\\
\sin\theta(\zeta)
\end{smallmatrix}\right)$, $\theta(\zeta)\asymp\pi/2$.
Define two functions $\xi=\xi(x,y)$ and $\eta=\eta(y)$ implicitly by
$$\begin{pmatrix}
x\\
 y
\end{pmatrix}=\begin{pmatrix}
x^s(y_1)\\
y_1
\end{pmatrix}+\xi\cdot\begin{pmatrix}
1\\
0
\end{pmatrix}+\eta\cdot\begin{pmatrix}
\cos\theta(\zeta)\\
\sin\theta(\zeta)
\end{pmatrix}.$$
Solving these equations gives
\begin{equation*}
\xi(x,y)=x-x^s(y_1)-\frac{\cos\theta(\zeta)}{\sin\theta(\zeta)}(y-y_1). 
\end{equation*}
A direct computation gives
$$\frac{d^2\xi(x^s(y),y)}{dy^2}=\frac{d\xi}{dx}\frac{d^2x^s(y)}{dy^2}+\frac{d^2\xi}{dx^2}\left(\frac{dx^s(y)}{dy}\right)^2
+2\frac{d^2\xi}{dxdy}\frac{dx^s(y)}{dy}+\frac{d^2\xi}{dy^2}.$$
Using $|\frac{d^2x^s(y)}{dy^2}|\leq C$ and $|\frac{dx^s(y)}{dy}|\leq C\sqrt{b}$
which follow from conditions (c) (d) in Sect.\ref{critical},
$$\left|\frac{d^2\xi(x^s(y),y)}{dy^2}\right|\leq C.$$
Since $f(\gamma)$ is tangent to $\mathcal F^s(f(\zeta))$ at $f(\zeta)$,
$$\frac{d\xi(x^s(y),y)}{dy}(y_1)=0.$$ 
We get
\begin{equation}\label{quad0}|\xi(x^s(y_0),y_0)|=|\xi(x^s(y_0),y_0)-\xi(x^s(y_1),y_1)|\leq C|y_0-y_1|^2.\end{equation}
We also have
\begin{equation}\label{quad-1}|y_0-y_1|\leq |\eta(y_0)|.\end{equation}
Parametrize the $C^2(b)$-curve $\gamma$ by arc length $s$ so that
that $\gamma(s_0)=z$ and $\gamma(s_1)=\zeta$.
Then $\zeta-z=\int_{s_0}^{s_1}D_{\gamma(s)}f(\dot{\gamma}(s))ds$,
where the dot ``$\cdot$" denotes the $s$-derivative.
Split
\begin{equation}\label{coef}D_{\gamma(s)}f(\dot\gamma(s))=A(\gamma(s))\cdot
\begin{pmatrix}1\\0\end{pmatrix}+B(\gamma(s))\cdot \begin{pmatrix}
\cos\theta(\zeta)\\
\sin\theta(\zeta)
\end{pmatrix}.\end{equation}
The proof of \cite[Lemma 2.2]{Tak11} implies $$|A(\gamma(s))|\asymp 2|\gamma(s)-\zeta|\ \text{ and }\
|B(\gamma(s))|\leq C\sqrt{b}.$$
Integrations from $s=s_0$ to $s_1$ gives
\begin{equation}\label{quad1}|\xi(x_0,y_0)|\asymp 2|z-\zeta|^2\ \text{ and }\ |\eta(y_0)|\leq C\sqrt{b}|z-\zeta|.\end{equation}
Using \eqref{quad0} \eqref{quad-1} for $y=y_0$ and the second estimate in \eqref{quad1} we obtain
\begin{equation}\label{quad2}|\xi(x^s(y_0),y_0)|\leq C|y_0-y_1|^2\leq
C|\eta(y_0)|^2 \leq Cb|\xi(x_0,y_0)|.\end{equation} 
This yields
\begin{equation}\label{quad-3}
|x_0-x^s(y_0)|=|\xi(x_0,y_0)-\xi(x^s(y_0),y_0)|\asymp 2|\zeta-z|^2.
\end{equation}
This implies that the tangency between $f(\gamma)$ and $\mathcal F^s(f(\zeta))$ at $f(\zeta)$ is quadratic.
If there were two critical points on $\gamma$, then the two leaves through the critical values intersect
each other.
This is absurd
because the leaves of the foliation $\mathcal F^s$ are integral curves of $C^1$ vector fields.


\medskip

\noindent{\it Estimate 2 (slopes and lengths of iterated curves).}
Let $l$ denote the straight segment connecting $f(z)=(x_0,y_0)$ and $(x^s(y_0),y_0)\in\mathcal F^s(f(\zeta))$.
Arguing inductively, it is possible to show that
for every $i\in\{1,2,\ldots, p-1\}$
the slopes of tangent directions of $f^i(l)$ are everywhere $\leq\sqrt{b}$, and
\begin{equation}\label{lema1}
\begin{split} {\rm length}(f^{i}(l))&\asymp{\rm length}(l)\|w_{i+1}(\zeta)\|  \leq D_{p-1}(\zeta)\|w_{i+1}(\zeta)\|\\
&    \leq D_{p-1}(\zeta)\|w_{p}(\zeta)\| \approx\tau.\end{split}
\end{equation}
The $\asymp$ follows from the bounded distortion near $Q$, and
the first inequality from the definition of the bound period $p$.
The $\approx$ follows from the next estimate:
 using \eqref{eq-2} we have
\begin{equation}\label{lema}
D_k(\zeta)= \tau\left[\sum_{i=1}^k\frac{\|w_{i}(\zeta)\|^2}{\|w_{i+1}(\zeta)\|}\right]^{-1}\asymp
\tau\left[\sum_{i=1}^ke^{\lambda^u(\delta_Q)(i-1)}\right]^{-1}\approx \tau 
e^{-\lambda^u(\delta_Q)k}.\end{equation}
\medskip

\noindent{\it Estimate 3 (length of fold periods).}
Define a {\it fold period} $q=q(\zeta,z)$ by
\begin{equation}\label{Q}
q=\min\{i\in\{1,2,\ldots,p-1\}\colon|\zeta-z|^{\beta}\|w_{j+1}(\zeta)\|\geq1\ \ \text{for every }j\in\{i,i+1,\ldots,p-1\}\},\end{equation}
where
$$\beta=-\frac{1}{\log b}.$$
This definition makes sense because $|\zeta-p|^{\beta}\|w_p(\zeta)\|=|\zeta-z|^{\beta-2}|\zeta-z|^{2}\|w_p(\zeta)\|>1$ from \eqref{lema}.
By the definition of $q$ and \eqref{eq-1} we have
 $$1\leq |\zeta-z|^{\beta}\|w_{q+1}(\zeta)\|\leq|\zeta-z|^{\beta}5^q.$$
 This yields
 \begin{equation}\label{qlength}
q\geq \log|\zeta-z|^{-\frac{\beta}{\log 5}}.
 \end{equation}
\medskip

\noindent{\it Proof of Proposition \ref{binding} (continued).}
Recall that any closed ball of radius $\sqrt{\tau}$ about a point in $\alpha_0^-$ is contained in $U$.
Hence, the conditions $f^{n(\zeta)}(\zeta)\in U^-$, $f^{n(\zeta)+1}(\zeta)\notin U^-$ and the hyperbolicity of the fixed saddle $Q$
altogether imply that there is a ball of radius 
of order $\sqrt{\tau}$ about $f^{n(\zeta)-1}(\zeta)$ which is contained in $U$.
Since $p\leq n(\zeta)$, for every $i\in\{1,2,\ldots,p-1\}$
there is a ball of radius 
of order $\sqrt{\tau}$ about $f^{i}(\zeta)$ which is contained in $U$.
Since
${\rm length}(f^{i-1}(l))<\tau$ as above,
we obtain
$|f^{i}(\zeta)-f^{i}(z)|<{\rm length}(f^{i-1}(l))+(Cb)^{i}\ll\sqrt{\tau},$
and therefore $f^i(z)\in U$ and (a) holds.

Using \eqref{quad-3} \eqref{lema1} and \eqref{eq-2}
we have $$|\zeta-z|^2<D_{p-1}(\zeta)\approx \tau e^{-\lambda^u(\delta_Q)p}.$$
Taking logs of both sides and rearranging the result gives $p\leq -\log|\zeta-z|^{\frac{3}{2\log 2}}$
because $\lambda^u(\delta_Q)\to\log4$ as $b\to0$.
Since $3|\zeta-z|^2>D_{p}(\zeta)$, the lower estimate follows similarly
and (b) holds.

Write $e^s(f(z))=
\left(\begin{smallmatrix}
\cos\theta(z)\\
\sin\theta(z)
\end{smallmatrix}\right)$, $\theta(z)\approx\pi/2$.
Recall that $v$ is any nonzero vector tangent to $\gamma$ at $z$.
Split
\begin{equation}\label{coef2}\frac{1}{\|v\|}\cdot D_zf(v)=A_0\cdot
\begin{pmatrix}1\\0\end{pmatrix}
+B_0\cdot  \begin{pmatrix}
\cos\theta(z)\\
\sin\theta(z)
\end{pmatrix}.\end{equation}
From \eqref{coef} \eqref{coef2} we have
\begin{align*}
A(z)+B\cos\theta(\zeta)&=A_0+B_0\cos\theta(z);\\
B(z)\sin\theta(\zeta)&=B_0\sin\theta(z).
\end{align*}
Solving these equations gives
$$A_0-A(z)=B(z)(\cos\theta(\zeta)-\cos\theta(z))+B(z)\left(1-\frac{\sin\theta(\zeta)}{\sin\theta(z)}\right)\cos\theta(z).$$
The right-hand-side is $\leq |B(z)||\zeta-z|\leq C\sqrt{b}|\zeta-z|$ in modulus, and hence
we have $|A_0|\asymp2|\zeta-z|$.
Therefore
\begin{equation}\label{eq-4}|A_0|\cdot \|D_{f(z)}f^{i-1}\left(\begin{smallmatrix}1\\0\end{smallmatrix}\right)\|\asymp2|\zeta-z|\cdot\|w_i(\zeta)\|
\ \ \text{for every }i\in\{0,1,\ldots,p\}.\end{equation}

Let $i\in\{q,q+1,\ldots,p\}$. From the definition of $q=q(\zeta,z)$ in \eqref{Q},
\begin{equation}\label{eq-5}|\zeta-z|\cdot\|w_i(\zeta)\|\geq|\zeta-z|^{1-\beta}.\end{equation}
On the other hand,
$$\|D_{f(z)}^i(e^s(f(z)))\|\leq(Cb)^i\leq (Cb)^q\leq|\zeta-z|^{\frac{5}{3}}.$$
The last inequality holds for sufficiently small $\delta$,
by virtue of the definition of $q$ and its lower bound in \eqref{qlength}.
Hence
\begin{equation}\label{eq-10}\|D_zf^i(v)\|\asymp 2|\zeta-z|\cdot\|w_i(\zeta)\|\cdot\|v\|\ \ \text{for every }i\in\{q,q+1,\ldots,p\}.
\end{equation}
This yields $$\frac{\|D_zf^p(v)\|}{\|D_zf^i(v)\|}\asymp\frac{\|w_p(\zeta)\|}{\|w_i(\zeta)\|}\asymp e^{\lambda^u(\delta_Q)(p-i)},$$
and therefore
 $\|D_zf^p(v)\|\geq e^{\lambda_0 (p-i)}\|D_zf^i(v)\|$.

Using \eqref{eq-10} with $i=p$ and then \eqref{lema} gives 
\begin{equation}\label{eq-3}\|D_zf^p(v)\|\asymp2|\zeta-z|\cdot\|w_p(\zeta)\|\cdot\|v\|\approx\tau^2|\zeta-z|^{-1}\|v\|
\approx\tau^{\frac{3}{2}}e^{\frac{p}{2}\lambda^u(\delta_Q)}\|v\|,\end{equation}
and therefore
 $\|D_zf^p(v)\|\geq e^{\lambda_0 p}\|v\|$ provided $\delta$ is sufficiently small and hence $p$ is large.

We now treat the case $i\in\{1,2,\ldots,q-1\}$.
Using $\|w_i(\zeta)\|<\|w_q(\zeta)\|$
and the definition of $q$ we have
$$|\zeta-z|\cdot\|w_i(\zeta)\|\leq |\zeta-z|\cdot\|w_q(\zeta)\|<|\zeta-z|^{1-\beta}<\sqrt{\delta}.$$
For the other component in the splitting, 
$$\|D_{f(z)}^i(e^s(f(z)))\|\leq(Cb)^i\leq Cb.$$
Hence $\|D_zf^i(v)\|<\|v\|$. Using this and \eqref{eq-3} we obtain
$$\|D_zf^p(v)\|>\frac{\|D_zf^i(v)\|}{\|v\|}\|D_zf^p(v)\|\asymp\|D_zf^i(v)\||\zeta-z|^{-1}\approx\frac{1}{\sqrt{\tau}} \|D_zf^i(v)\|e^{\frac{\lambda^u(\delta_Q)}{2}p}.$$
This yields $\|D_zf^p(v)\|\geq e^{\lambda_0 (p-i)}\|D_zf^i(v)\|$ provided $\delta$ is sufficiently small.
We have proved (c).

It is left to prove
(II). By the definition of $n(\zeta)$ we have $f^{n(\zeta)}(\zeta)\in U^-$ and $f^{n(\zeta)+1}(\zeta)\notin U^-$.
This and the choice of $\tau$ together imply that there is a closed ball of radius of order $\sqrt{\tau}$
about $f^{n(\zeta)}(\zeta)$ which does not intersect $\alpha_0^-$. 
Since $\zeta\in S$, $f^{n(\zeta)}(\zeta)$ is at the left of $\alpha_0^-$.
Since
${\rm length}(f^{n(\zeta)-1}(l))<\tau$ 
we have
$|f^{n(\zeta)}(\zeta)-f^{n(\zeta)}(z)|<   {\rm length}(f^{n(\zeta)-1}(l))+(Cb)^{n}\ll\sqrt{\tau}.$
This implies $f^{n(\zeta)}(z)\notin R$.
\end{proof}

\section{Global construction}
In this section we use the results in Sect.2 to construct an induced system with uniformly hyperbolic behavior.
From the induced system we extract a hyperbolic set, the dynamics on which is conjugate to the
 full shift on a finite number of symbols. This hyperbolic set will be used to complete the proof of the main theorem in the next section.

\subsection{Construction of induced system}\label{construct}
In this subsection we deliberately construct an induced system with uniformly hyperbolic Markov structure,
with countably infinite number of branches.
Although a similar construction has been done in \cite{Tak15} at $a=a^*$ to analyze
equilibrium measures for the potential $-t\log J^u$ as $t\to+\infty$, it
does not fit to our purpose of studying the case $t\to-\infty$. 
Moreover, a treatment of the case $a^{**}<a<a^*$ brings additional difficulties which 
are not present in \cite{Tak15}.
We exploit the geometric structure of invariant manifolds of $P$ and $Q$ which 
are ``not destroyed yet" by the homoclinic bifurcation.

We say a $C^2(b)$-curve $\gamma$ in $R$ {\it stretches across} $\Theta$ if both endpoints of $\gamma$ 
are contained in the stable sides of $\Theta$.
Let $\omega$, $\omega'$ be two rectangles in $\Theta$ such that $\omega\subset\omega'$.
We call $\omega$ a {\it $u$-subrectangle} of $\omega'$ 
if each stable side of $\omega'$ contains one stable side of $\omega$.
Similarly, we call $\omega$ an {\it $s$-subrectangle} of $\omega'$ 
if each unstable side of $\omega'$ contains one unstable side of $\omega$.

\begin{prop}\label{induce}
There exist a $u$-subrectangle $\Theta'$ of $\Theta$, a large integer $k_0\geq1$, a constant $C\in(0,1)$
and a countably infinite family $\{\omega_k\}_{k\geq k_0}$ of $s$-subrectangles of $\Theta'$ with the following properties:

\begin{itemize}

\item[(a)] the unstable sides of $\Theta'$ are $C^2(b)$-curves stretching across $\Theta$
and intersecting ${\rm Int}(S)$;

\item[(b)] for every nonzero vector $v$ tangent to the unstable side of $\Theta'$ and every integer $n>0$,
$\|Df^{-n}(v)\|\leq e^{-\lambda_0 n}\|v\|;$


\item[(c)] for each $\omega_k$, ${\rm Int}(\Theta)\cap f^i(\omega_k)=\emptyset$ for every $i\in\{1,2,\ldots,k-1\}$,
and
$f^k(\omega_k)$ is a $u$-subrectangle of $\Theta'$ whose unstable 
sides are $C^2(b)$-curves stretching across $\Theta'$;

\item[(d)] if $z\in\omega_k\cap\Omega$ 
and ${\rm slope}(E_z^u)\leq\sqrt{b}$, then
$\|Df^{k}|E_{z}^u\|\geq Ce^{\lambda^u(\delta_Q)k}.$

\end{itemize}
\end{prop}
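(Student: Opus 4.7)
The plan is to build $\Theta'$ from arcs of $W^u(Q)$ (or $W^u(P)$ when $\det Df<0$) that stretch across $\Theta$ and pass through ${\rm Int}(S)$, then index the Markov branches $\omega_k$ by the bound periods from Section~\ref{bindarg}.

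First I would choose two disjoint compact arcs in $W^u(Q)$ that stretch across $\Theta$, have endpoints on $\alpha_1^\pm$, and each pass through ${\rm Int}(S)$; such arcs are available for every $a\in(a^{**},a^*]$ because the relevant invariant manifold configuration has not yet been destroyed by the homoclinic bifurcation at $a^*$. Declaring $\Theta'$ to be the $u$-subrectangle bordered by these two arcs and the pieces of $\alpha_1^\pm$ cut off between them, property~(a) follows from Lemma~\ref{curvature} applied to forward iteration of nearly-horizontal arcs inside $R$, and property~(b) from Lemma~\ref{expand} along the backward orbit of a point on an unstable side, using that tangent vectors stay $\sqrt{b}$-horizontal under $f^{-1}$ and that excursions into $I(\delta)$ are of bounded duration.

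To construct the $\omega_k$, I would use that by Remark~\ref{admit} each unstable side of $\Theta'$ admits a unique critical point $\zeta$, and that for $z\in\Theta'$ close to $\zeta$ the bound period $p=p(\zeta,z)$ is finite. Proposition~\ref{binding}(I)(b) gives $f^i(z)\in U$ for $1\le i\le p-1$, and since $U$ is disjoint from ${\rm Int}(\Theta)$ by the definition of $U$ in Section~\ref{critical}, no return is possible before time $p$. I would set $\omega_k$ to be the $s$-subrectangle of $\Theta'$ consisting of those $z$ whose first return to ${\rm Int}(\Theta)$ occurs at time $k$; the two stable sides of $\omega_k$ are nearly-vertical $f^{-k}$-preimages of short arcs in $\alpha_1^\pm$. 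The quadratic-tangency estimate \eqref{quad-3} together with the length estimate ${\rm length}(f^{p-1}(l))\approx\tau$ from the proof of Proposition~\ref{binding} then guarantees that $f^k(\omega_k)$ spans all the way across $\Theta'$, giving property~(c), and Lemma~\ref{curvature2} confirms the $C^2(b)$ character of its unstable sides.

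For property~(d), suppose $z\in\omega_k\cap\Omega$ with ${\rm slope}(E^u_z)\le\sqrt{b}$. Remark~\ref{admissible} gives $p\le n(\zeta)$, so Proposition~\ref{binding}(I) applies; combining the internal asymptotic $\|D_z f^p|E^u_z\|\asymp 2|\zeta-z|\,\|w_p(\zeta)\|$ with the bounded-distortion estimate \eqref{eq-2} around $Q$ supplies the bulk of the expansion during the bound period, and Lemma~\ref{expand} covers any additional iterates between time $p$ and time $k$. The main obstacle will be reconciling the critical-jump loss $|\zeta-z|\asymp\sqrt{\tau}\,e^{-\lambda^u(\delta_Q)p/2}$ with the \emph{sharp} rate $\lambda^u(\delta_Q)$ required in~(d) (a naive $k=p$ identification would yield only $\lambda^u(\delta_Q)/2$): this forces the return to $\Theta'$ to occur only after an additional near-$Q$ passage that restores the lost factor, and it is here that the restriction $a\in(a^{**},a^*]$ enters by guaranteeing the geometric configuration needed for that second passage remains intact.
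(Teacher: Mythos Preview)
Your outline for items (a)--(c) is broadly in the right spirit, and your use of the first-return-time stratification together with the preimage curves of $\alpha_1^\pm$ matches what the paper does geometrically. The genuine gap is in your treatment of~(d).

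You correctly flag the danger: combining \eqref{eq-10} with \eqref{lema} gives $\|D_zf^p|E^u_z\|\approx |\zeta-z|\,e^{\lambda^u(\delta_Q)p}$, and the remaining free stretch near $Q$ contributes $e^{\lambda^u(\delta_Q)(k-p)}$, so in total
\[
\|D_zf^k|E^u_z\|\approx |\zeta-z|\,e^{\lambda^u(\delta_Q)k}.
\]
Thus (d) holds with a fixed $C$ \emph{if and only if} $|\zeta-z|$ is bounded away from $0$ uniformly over $\bigcup_k\omega_k$. Your proposed fix---arranging an ``additional near-$Q$ passage'' before the return to $\Theta'$---cannot repair this: whatever extra time you insert is already counted in $k$, and the product above shows the deficit is exactly the factor $|\zeta-z|$, not a missing block of iterates. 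If $|\zeta-z|\to 0$ along a sequence $k\to\infty$ you get only the rate $\tfrac12\lambda^u(\delta_Q)$ and no rearrangement of the itinerary changes that.

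What the paper actually does is prove that $|\zeta-z|$ \emph{is} uniformly bounded below (Lemma~\ref{upbound}): the bound periods satisfy $p(\zeta,z)\le E$ for a fixed $E$, after which (d) follows immediately from $\|Df^p|E^u_z\|>1$ and the bounded-distortion expansion $e^{\lambda^u(\delta_Q)(k-p)}$ near $Q$. The mechanism is geometric and is precisely where the careful choice of $\Theta'$ enters. The $\omega_k$ sit on \emph{one side} of $S$ only, so $f(z)\in R$ lies to the left of $\mathcal F^s(\alpha_0^+)$; meanwhile the inner unstable side of $\Theta'$ is chosen so that its critical value lies at a \emph{definite} positive distance to the right of $\mathcal F^s(\alpha_0^+)$, and any $C^2(b)$-curve through $z$ sandwiched between the two unstable sides inherits this property for its own critical value $f(\zeta)$. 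The quadratic estimate \eqref{quad-3} then forces $|\zeta-z|^2$ to dominate that fixed gap. In short: as $k\to\infty$ the rectangles $\omega_k$ accumulate on the parabolic arc $\partial S\cap W^s(Q)$, \emph{not} on the critical locus, which stays a fixed distance inside ${\rm Int}(S)$. Your picture implicitly has $z$ approaching $\zeta$, and that is the error.
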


 \begin{proof}
Since the construction is involved, we start with giving a brief sketch. 
 Let $\gamma_0$ denote the $C^2(b)$-curve in $W^u$ which stretches across $\Theta$ and is part of the boundary of $S$. 
 This curve, which obviously satisfies the exponential backward contraction property as in item (b), will be one of the unstable sides of $\Theta'$. 
 In Step 1 we find another $C^2(b)$-curve stretching across $\Theta$,
 which will be the other unstable side of $\Theta'$.
In Step 2 we construct the rectangles $\{\omega_k\}_{k\geq k_0}$ by subdividing $\Theta'$
into smaller rectangles, with a family of compact curves in $W^s(P)$.
Both of the steps depend on the orientation of the map $f$.
\medskip

\noindent{\it Step 1 (construction of $\Theta'$).}
Suppose that $\alpha,\alpha'$ are two compact curves in $R\cap W^s(P)$ which join the two unstable 
sides of $R$ and intersect $\gamma_0$ exactly at one point.
We write $\alpha\prec\alpha'$ if 
${\rm proj}(\alpha\cap \gamma_0)<{\rm proj}(\alpha'\cap \gamma_0)$,
where ${\rm proj}$ denotes the projection to the first coordinate.

Set  $\tilde\alpha_0=\alpha_1^+$ and $\tilde\alpha_1=\alpha_1^-$. 
Let $\{\tilde\alpha_k\}_{k\geq0}$ denote the sequence of compact curves in $R\cap W^s(P)$
with the following properties: 
each $\tilde\alpha_k$ joins the two unstable sides of $R$; $\tilde\alpha_k\prec\tilde\alpha_{k-1}$ and
$ f(\tilde\alpha_k)\subset\tilde\alpha_{k-1}$
for every $k\in\{1,2,\ldots\}.$
Notice that $\tilde\alpha_k$ converges to $\alpha_0^-$ as $k\to\infty$.

For every $k\geq1$
the set $R\cap f^{-2}(\tilde\alpha_{k-1})$ has three or four connected components.
Two of them are $\tilde\alpha_{k+1}$ and the connected component of
$R\cap f^{-1}(\tilde\alpha_{k})$ which is not $\tilde\alpha_{k+1}$.
Let $\alpha_{k}$ denote the union of the remaining one or two connected components
of $R\cap f^{-2}(\tilde\alpha_{k-1})$.
By definition, $\alpha_{k}$ is located near the origin
(If $a=a^*$, then for every $k\geq0$, 
$\alpha_{k}$ has two connected components.
If $a^{**}<a<a^*$ then there exists an integer $k'=k'(a)$ such that 
$\alpha_{k}$ has two connected components if and only if $k< k'$).
Choose a large integer $\hat k\geq1$ such that $\alpha_k\subset I(\delta)$ holds for every $k\geq \hat k$.


The rest of the construction of $\Theta'$ depends on the orientation of $f$.
We first consider the case $\det Df>0$.
Let $k\geq \hat k$. The set $\alpha_k$ intersects $\gamma_0$ exactly at two points.
Let $\gamma_k^-,\gamma_k^+$ denote the compact curve in $\gamma_0$ whose endpoints are 
in $\alpha_k$ and $\alpha_{k+1}$, and satisfy $\sup {\rm proj}(\gamma_k^-)<\inf {\rm proj}(\gamma_k^+)$.
Since ${\rm Int}(\Theta')\cap f^i(\gamma^k)=\emptyset$ for every $i\in\{1,2,\ldots,k-1\}$
and $f^k(\gamma^k)\subset\Theta$,
 $f^k(\gamma^k)$ is a $C^2(b)$-curve from Lemma \ref{curvature2}.
 In addition, since its endpoints are contained in the stable sides of $\Theta'$,
 $f^k(\gamma^k)$ stretches across $\Theta$.
Enlarging $\hat k$ if necessary, we have ${\rm Int}(S)\cap f^k(\gamma_k^+)\neq\emptyset$
for every $k\geq \hat k$.
Define $\Theta'$ to be the rectangle bordered by $\gamma_0$,
$f^{\hat k}(\gamma_{\hat k}^+)$ and the stable sides of $\Theta$.
The exponential backward contraction property in item (b) may be proved along the line 
of the proof of \eqref{backward} and hence we omit it.

\begin{remark}
{\rm There is no particular reason for our choice of $\gamma_k^+$. Choosing $\gamma_k^-$ does the same job.}
\end{remark}

The case $\det Df<0$ is easier to handle.
Since $a\in(a^{**},a^*]\subset\mathcal G$, there is the unique compact domain bounded by $f^{-2}(W^s_{\rm loc}(Q))$ and $\ell^u$,
which is contained in $S$.
Define $\Theta'$ denote the rectangle bordered by the stable sides of $\Theta$,
$\gamma_0$ and the $C^2(b)$-curve in $W^u(Q)$ which stretches across $\Theta$ and contains $\ell^u$.
Items (a) and (b) in Proposition \ref{induce} hold.

\medskip

\noindent{\it Step 2 (construction of $\omega_k$).}
The set $\alpha_{k}\cap \Theta'$ consists of two connected components, one at the left
of $S$ and the other at the right of $S$. 
Let $\hat\omega_k$ denote the connected component of $\alpha_{k}\cap \Theta'$ 
which lies at the right of $S$. 
Then $f^k(\hat\omega_k)$ is a $u$-subrectangle of $\Theta$, whose unstable sides 
are $C^2(b)$-curves stretching across $\Theta$.

In the case $\det Df>0$, 
choose a sufficiently large integer $k_0>\hat k$
depending on the parameter $a$ such that for every $k\geq k_0$,
 $f^k(\hat\omega_k)$ is a $u$-subrectangle of $\Theta'$.
 Set $\omega_k=\hat\omega_k$.
 In the case $\det Df<0$, $f^k(\hat\omega_k)$ may not be contained in $\Theta'$ (See FIGURE 6),
and this is always the case for $a<a^*$ and sufficiently large $k$.
However, note that 
$f^{k+2}(\hat\omega_k)$ contains a unique $u$-subrectangle $\omega'$ of $\Theta'$.
Set  $k_0=\hat k+2$ and
$\omega_{k}=f^{-k}(\hat\omega_{k-2})$ for every $k\geq k_0$.
This finishes the construction of $\{\omega_k\}_{k\geq k_0}$. Item (c) is a direct consequence of the construction.

To prove item (d) we need
 the next uniform upper bound
on the length of bound periods.
\begin{lemma}\label{upbound}
There is a constant $E>0$ such that if $z\in \bigcup_{k\geq k_0}\omega_k$ and $\zeta$ is a critical point
on a $C^2(b)$-curve which is tangent to $v$, 
then $p(\zeta,z)\leq E$.
\end{lemma}
\begin{proof}
Let $z$, $\zeta$ be as in the statement of the lemma and assume $z\in\omega_k$.
By construction, one of the unstable sides of $\omega_k$ is contained in the $C^2(b)$-curve $\gamma_0$ which 
is not contained in the unstable sides of $\Theta$ and stretches across $\Theta$.
Let $\zeta'$ denote the critical point on $\gamma_0$.
With a slight abuse of notation, let $\mathcal F^s(\alpha_0^+)$ denote the leaf containing $\alpha_0^+$.
The leaf $\mathcal F^s(f(\zeta'))$ lies at the right of $\mathcal F^s(\alpha_0^+)$, and 
$\mathcal F^s(f(\zeta))$ lies at the right of $\mathcal F^s(\alpha_0^+)$.
Since $f(z)\in R$ we have
$$3|\zeta-z|^2\geq \inf\{|z_1-z_2|\colon z_1\in\mathcal F^s(\alpha_0^+),\ z_2\in\mathcal F^s(f(\zeta'))\}>0.$$
Taking logs of both sides and then using Proposition \ref{binding}(a) 
 yields the claim.
 \end{proof}

Since the point $z$ is sandwiched by the two
 $C^2(b)$-curves intersecting ${\rm Int}(\Theta)$,
there exists a $C^2(b)$-curve which is tangent to $E_z^u$
and contains a critical point $\zeta$.
Let $p=p(\zeta,z)$ denote the bound period given by Proposition \ref{binding}.
Since ${\rm slope}(E_{f^{p}(z)}^u)\leq\sqrt{b}$,
$f^n(z)\notin{\rm Int}(\Theta)$ for every $n\in\{n,n+1,\ldots,k-1\}$ and $f^k(z)\in\Theta$,
the bounded distortion for iterates near $Q$ gives
$$\|D_{f^{p}(z)}f^{k-p}|E_{f^{p}(z)}^u\|\approx  e^{\lambda^u(\delta_Q)(k-p)}.$$
Using $\|Df^{p}|E_z^u\|>1$ and $p\leq E$ by Lemma \ref{upbound} we obtain
$$\|Df^{k}|E_{z}^u\|=\|Df^{p}|E_{z}^u\|\cdot\|Df^{k-p}|E_{f^{p}(z)}^u\|\geq 
Ce^{-\lambda^u(\delta_Q)E}\|D_{f^{p}(z)}f^{k-p}|E_{f^{p}(z)}^u\|,$$
and hence item (d) holds.
This completes the proof of Proposition \ref{induce}.
\end{proof}

\begin{figure}
\begin{center}
\includegraphics[height=5.5cm,width=14.5cm]
{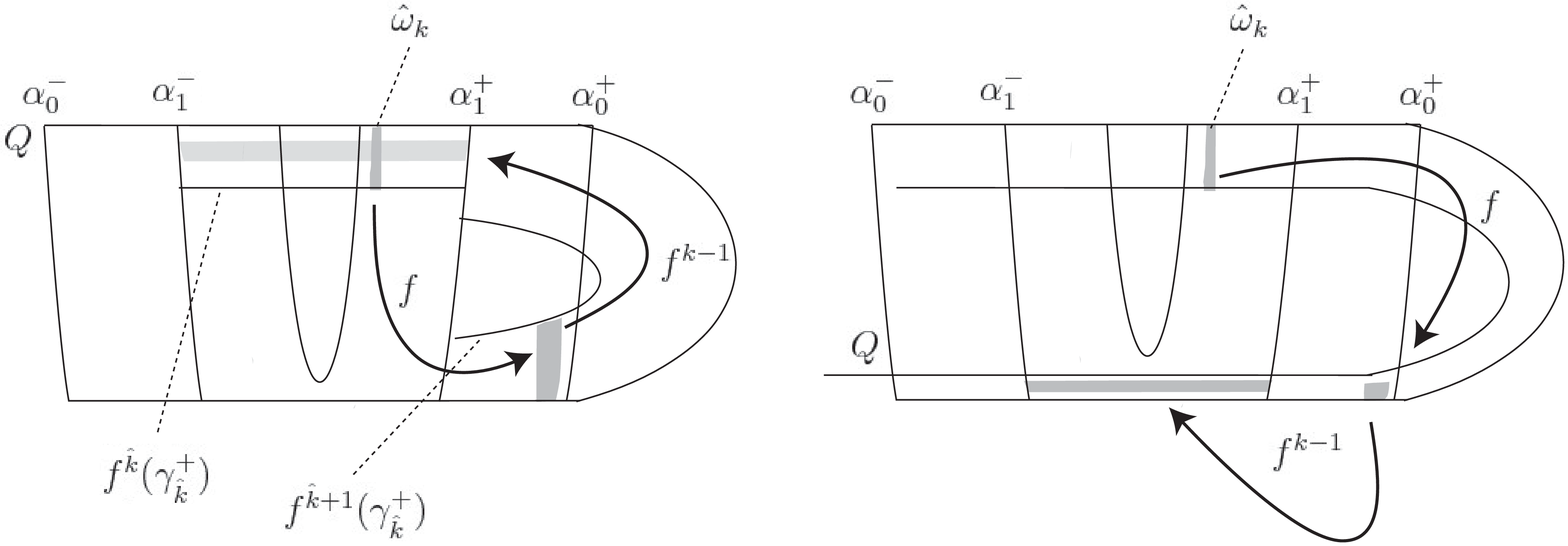}
\caption{The rectangles $\hat\omega_k$, $f(\hat\omega_k)$, $f^k(\hat\omega_k)$ (shaded) for $a\in(a^{**},a^*)$: 
the case $\det Df>0$ (left); the case $\det Df<0$ (right).}
\end{center}
\end{figure}





\subsection{Symbolic dynamics}\label{symbol}
From the induced system in Proposition \ref{induce}
we extract a finite number of branches, and construct a conjugacy to the full shift on a finite number of symbols. 
For two positive integers $q_0$, $q_1$ with $q_0<q_1$ define
 $$\Sigma(q_0,q_1)=\{\underline{a}=\{a_i\}_{i\in\mathbb Z}\colon a_i\in\{q_0,q_0+1,\ldots,q_1\}\}.$$
 This is the set of two-sided sequences with $q_1-q_0+1$-symbols.
 Endow $\Sigma(q_0,q_1)$ with the product topology of the discrete topology of $\{q_0,q_0+1,\ldots,q_1\}$.

\begin{prop}\label{symbolprop}
For all integers $1\leq q_0<q_1$ there exist a continuous injection $\pi\colon\Sigma(q_0,q_1)\to\Omega$ 
and a constant $C\in(0,1)$ such that the following holds:

\begin{itemize}

\item[(a)]  for every $\underline{a}\in\Sigma(q_0,q_1)$,
${\rm slope}(E_{\pi(\underline{a})}^u)\leq\sqrt{b};$

\item[(b)]  for every $\underline{a}=\{a_i\}_{i\in\mathbb Z}\in\Sigma(q_0,q_1)$,
 $\|Df^{a_0}|E^u_{\pi(\underline{a})}\|\geq  Ce^{\lambda^u(\delta_Q)a_0}$
and $\|Df^{a_i}|E^u_{f^{a_0+a_1+\cdots+a_{i-1}}(\pi(\underline{a}))}\|\geq  Ce^{\lambda^u(\delta_Q)a_i}$
for every integer $i\geq1$;

\item[(c)] $\underline{a}\in\Sigma(q_0,q_1)\mapsto E^u_{\pi(\underline{a})}$ is continuous.
\end{itemize}
\end{prop}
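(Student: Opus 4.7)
The plan is to realize $\pi(\underline{a})$ as the unique point whose bi-infinite orbit follows the Markov itinerary dictated by $\underline{a}$ in the induced system $\{\omega_k\}_{k \geq k_0}$ from Proposition \ref{induce}; without loss of generality I assume $q_0 \geq k_0$. For integers $m \leq n$ and $\underline{a} = \{a_i\}_{i \in \mathbb{Z}} \in \Sigma(q_0, q_1)$ set
$$C_m^n(\underline{a}) = \bigcap_{i=m}^{n} f^{-s_i}(\omega_{a_i}),\qquad s_i = \sum_{j=m}^{i-1} a_j,$$
with the convention that the empty sum $s_m$ equals $0$. The Markov property of the induced system (Proposition \ref{induce}(c)) -- each $f^{a_i}(\omega_{a_i})$ is a u-subrectangle of $\Theta'$ while every $\omega_{a_{i+1}}$ is an s-subrectangle of $\Theta'$ -- guarantees that each forward step in $n$ yields an s-subrectangle of $\omega_{a_m}$ whose unstable-direction width shrinks geometrically by the expansion factor $e^{-\lambda^u(\delta_Q) a_n}$ of Proposition \ref{induce}(d), while each backward step in $m$ yields a u-subrectangle whose stable-direction width shrinks geometrically by the exponential backward contraction of Proposition \ref{induce}(b). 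The bi-infinite intersection therefore collapses to a single point, which I take to be $\pi(\underline{a})$.

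Two basic facts then follow immediately: $\pi(\underline{a}) \in \Omega$ because the entire orbit of $\pi(\underline{a})$ remains in the compact set $R$ and is recurrent by construction; and $\pi$ is shift-equivariant in the sense that $f^{s_i}(\pi(\underline{a})) = \pi(\sigma^i \underline{a})$. Injectivity is immediate from the disjointness of distinct zero-coordinate cylinders, and continuity in the product topology follows from the geometric shrinking of diameters.

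For (a), the unstable sides of the nested backward cylinders $C_{-n}^{0}(\underline{a})$ are $C^2(b)$-curves through $\pi(\underline{a})$ obtained by forward-iterating the unstable sides of $\omega_{a_{-n}}$; distinct $n$ produce pairwise disjoint such curves because they sit in rectangles with distinct backward itineraries. Corollary \ref{c2cor} then supplies a $C^1$-limit curve through $\pi(\underline{a})$ whose tangent has slope $\leq\sqrt{b}$, and Proposition \ref{induce}(b) transferred along this limit verifies the backward contraction \eqref{eu}, placing $\pi(\underline{a}) \in \Lambda$ with $E^u_{\pi(\underline{a})}$ equal to the limit tangent. Property (b) is then a direct application of Proposition \ref{induce}(d) to each $\pi(\sigma^i\underline{a}) \in \omega_{a_i} \cap \Omega$, whose small-slope unstable direction is furnished by applying (a) to the shifted sequence $\sigma^i\underline{a}$. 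For (c), the uniform hyperbolicity constants emerging from Proposition \ref{induce} place the entire image $\pi(\Sigma(q_0,q_1))$ inside a single Pesin block $\Lambda_k(\chi,\epsilon)$, and the continuity of $z \mapsto E^u_z$ on $\Lambda_k(\chi,\epsilon)$ noted in the introduction, composed with the continuity of $\pi$, delivers (c).

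The principal technical obstacle will be verifying the pairwise-disjointness hypothesis of Corollary \ref{c2cor} for the backward-cylinder unstable sides -- this should reduce to distinct backward prefixes landing in disjoint rectangles of the Markov partition -- together with packaging the backward-contraction rate of Proposition \ref{induce}(b) and the derivative control from Proposition \ref{binding}(I)(c) into a uniform Pesin constant $k$ independent of $\underline{a}$ so that a single $\Lambda_k(\chi,\epsilon)$ absorbs the whole image of $\pi$. The remaining arguments are symbolic bookkeeping on the Markov structure of $\Sigma(q_0, q_1)$.
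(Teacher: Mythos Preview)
Your overall architecture matches the paper's, but three of your shortcuts do not go through as written. First, you cannot invoke Proposition \ref{induce}(d) to shrink the forward cylinders: that estimate is stated only for $z\in\omega_k\cap\Omega$ with ${\rm slope}(E^u_z)\leq\sqrt{b}$, and at this stage of the construction you have neither established that the candidate point lies in $\Omega$ nor that its unstable direction exists with small slope---this is precisely what you are trying to prove. The paper instead first shows $\bigcap_j\omega_j^u$ is a $C^1$-curve (via area contraction and Corollary \ref{c2cor}), then obtains expansion along that curve from Proposition \ref{binding} and Lemma \ref{expand}, and only afterward concludes the full intersection is a point. Second, injectivity is not immediate from ``disjointness of distinct zero-coordinate cylinders'': neighbouring $\omega_k$, $\omega_{k+1}$ share a stable side in $\alpha_k$, so two distinct sequences could a priori code the same boundary point. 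The paper rules this out by tracking such a point forward into $\alpha_1^+$ and deriving a contradiction with $\omega_{a_i}\cap\alpha_1^+=\emptyset$.

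Third, and most importantly, what you defer as ``packaging'' is the actual substance of the proof. Proposition \ref{induce}(b) gives backward contraction only for vectors tangent to the unstable sides of $\Theta'$; once you iterate forward, the relevant curves pass through $I(\delta)$ and the estimate must be rebuilt through each bound period. The paper does this explicitly as inequality \eqref{backward}, proved by a four-case analysis splitting the backward time interval at the return times $n_i$ and the bound-period endpoints $n_i+p_i$, applying Proposition \ref{binding}(I)(c) on $[n_i,n_i+p_i]$ and Lemma \ref{expand} on $[n_i+p_i,n_{i+1}]$. This is what simultaneously yields (a), identifies $E^u_{\pi(\underline{a})}$, and---since the rate $\lambda_0$ is uniform---gives (c) directly via the $C^1$-convergence in Corollary \ref{c2cor}, without any detour through Pesin blocks.
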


\begin{proof}
 Let $\underline{a}=\{a_i\}_{i\in\mathbb Z}\in\Sigma(q_0,q_1)$.
 For each integer $j\geq1$ define
 $$\omega^s_j=\omega_{a_0}\cap\left(\bigcap_{i=1}^j 
f^{-a_0}\circ f^{-a_1}\circ\cdots\circ f^{-a_{i-1}}(\omega_{a_i})\right)$$
and 
$$\omega^u_j=\bigcap_{i=1}^{j}f^{a_{-1}}\circ f^{a_{-2}}\circ\cdots\circ f^{a_{-i}}(\omega_{a_{-i}}).$$
Note that $\{\omega_j^s\}_{j\geq1}$ is a decreasing sequence of $s$-subrectangles of $\Theta'$,
and $\{\omega_j^u\}_{j\geq1}$ is a decreasing sequence of $u$-subrectangles of $\Theta'$.
Define a coding map $\pi\colon\Sigma(q_0,q_1)\to\Omega$ by
$$\{\pi(\underline{a})\}=\left(\bigcap_{j=1}^{+\infty}\omega_j^s\right)
\cap\left(\bigcap_{j=1}^{+\infty}\omega_j^u\right).$$ 
 We show below that the right-hand-side is a singleton, and so
$\pi$ is well-defined.

By Corollary \ref{c2cor} and the fact that $f$ contracts area, the set $\bigcap_{j=1}^{+\infty}\omega_j^u$ 
is a $C^1$ curve which connects the stable sides of $\omega_{a_0}$.
By Corollary \ref{c2cor} again, for any nonzero vector $v$ tangent to
 $\bigcap_{j=1}^{+\infty}\omega_j^u$ there exists a $C^2(b)$-curve which is tangent to $v$
 and contains a critical point. By
Proposition \ref{binding}(b) and Lemma \ref{expand},
for every integer $m\geq1$ we have
$${\rm length}\left(\left(\bigcap_{j=1}^{m}\omega_j^s\right)\cap\left(\bigcap_{j=1}^{+\infty}\omega_j^u\right)\right)\leq
\exp\left(-\lambda_0\sum_{i=0}^m a_i \right).$$
The right-hand  side goes to $0$ as $m\to\infty$.
This means that $\pi$ is well-defined.

The continuity of $\pi$ is obvious.
To show the injectivity, assume $\underline{a}\neq\underline{a'}$
and $\pi(\underline{a})=\pi(\underline{a'})$.
Then there exists an integer $i$ such that
$f^i(\pi(\underline{a}))$ belongs to two rectangles in $\{\omega_k\}_{k=q_0}^{q_1}$,
namely belongs to a curve in $\cup_{k\geq k_0}\alpha_k$
which is a stable side of two neighboring rectangles.
It follows that $f^{a_0+a_1+\cdots+a_i}(\pi(\underline{a}))\in \alpha_1^+$ holds for all large integer $i>0$.
On the other hand, the definition of $\pi$ gives
$f^{a_0+a_1+\cdots+a_i}(\pi(\underline{a}))\in\omega_{a_i}$.
Since $\omega_{a_i}\cap\alpha_1^+=\emptyset$ by construction, we obtain a contradiction.

Recall that the unstable direction is characterized by the exponential backward contraction property \eqref{eu}.
By Corollary \ref{c2cor} and the fact that the $C^1$-curve $\bigcap_{j=1}^{+\infty}\omega_j^u$ is obtained as the $C^1$-limit of the unstable sides of 
$\{\omega_j^u\}_{j\geq1}$.
To prove items (a) and (c) it suffices to show that 
for every integer $j\geq1$, any unstable side $\gamma$ of $\omega_j^u$,
every $z\in\gamma$, every integer $n\geq0$ and every vector $v$
 tangent to $f^{-n}(\gamma)$ at $f^{-n}(z)$,
\begin{equation}\label{backward}\|D_{f^{-n}(z)}f^n(v)\|\geq e^{\lambda_0 n}\|v\|.\end{equation}
Then Item (b) is a consequence of the construction of $\pi$ and Proposition \ref{induce}(d).

It is left to prove \eqref{backward}. For each $i\in\{1,2,\ldots,j\}$ define an integer $n_i\leq0$ by $f^{n_i}(\gamma)\in\omega_{a_i}$.
Note that $n_1<n_2<\ldots<n_{j-1}<n_j=0$.
Below we treat four cases separately.


\smallskip

\noindent{\it Case I: $-n=n_i$ for some $i$.}
We split the time interval $[n_i,0]$ into subintervals $[n_l,n_{l+1}]$ $(l=i,i+1,\ldots,j-1)$.
Then we apply the derivative estimates in Lemma \ref{expand} and Proposition \ref{binding}(c) 
to $[n_l,n_l+p_l]$ and $[n_l+p_l,n_{l+1}]$ respectively.
Recall that $\lambda^u(\delta_Q)\to\log4$ as $b\to0$
and $\lambda_0=\frac{99}{100}\log2$.
We obtain
\begin{equation}\label{back1}
\|D_{f^{-n}(z)}f^{n}(v)\|\geq\left( \prod_{l=i}^{j-1}e^{\lambda_0(n_{l+1}-n_l)}\right)\|v\|=e^{\lambda_0(n_j-n_i)}\|v\|=
e^{\lambda_0n}\|v\|.\end{equation}
\smallskip

\noindent{\it Case II: $n_i+p_i\leq -n<n_{i+1}$ for some $i$.}
Since $f^{n_{i+1}}(z)\in I(\delta)$, Lemma \ref{expand} gives
$$\|D_{f^{-n}(z)}f^{n_{i+1}+n}(v)\|\geq e^{\lambda_0(n_{i+1}+n)}\|v\|.$$
Using this and \eqref{back1} with $-n=n_{i+1}$ we get
$$\|D_{f^{-n}(z)}f^n(v)\|=\frac{\|D_{f^{n_{i+1}}(z)}f^{-n_{i+1}}(D_{f^{-n}(z)}f^{n_{i+1}+n}(v))\|}{\|D_{f^{-n}(z)}f^{n_{i+1}+n}(v)\|}
\cdot\|D_{f^{-n}(z)}f^{n_{i+1}+n}(v)\|\geq e^{\lambda_0 n}\|v\|.$$

\noindent{\it Case III: $n_i<-n<n_i+p_i$.}
Proposition \ref{binding}(c) gives
$$\|D_{f^{-n}(z)}f^{n_i+p_i+n}(v)\|\geq e^{\lambda_0(n_i+p_i+n)}\|v\|.$$
Combining this with the result in Case II yields the desired inequality.
\medskip

\noindent{\it Case IV: $-n<n_1$.}
 Since $f^{n_1}(\gamma)$ is contained in the unstable sides of $\Theta'$, Proposition \ref{induce}(b) gives
$\|D_{f^{-n}(z)}f^{n_1+n}(v)\|\geq e^{\lambda_0(n_1+n)}\|v\|$.
From this and \eqref{back1} with $-n=n_1$ we get the desired inequality.
\end{proof}


\section{Proof of the Main Theorem}
In this section we use the results in Sect.2 and finish the proof of the main theorem. 
Finally we provide more details on the main theorem,
on the abundance of parameters beyond $a^*$ satisfying $\mathcal M_0(f)=\mathcal M(f)$.ƒ

\subsection{Proof of the main theorem}
By virtue of Lemma \ref{maximal}, the removability in the main theorem follows from the next

\begin{prop}\label{estcor}
For any $t<0$ there exists a measure $\mu\in\mathcal M(f)$ such that
\begin{equation*}
h_\mu(f)-t\lambda^u(\mu)>-t\lambda^u(\delta_Q).
\end{equation*}

\end{prop}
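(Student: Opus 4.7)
The plan is to build $\mu$ by lifting an equidistributed Bernoulli measure through the symbolic model of Proposition \ref{symbolprop} and then spreading it out over the tower, and to read off entropy and Lyapunov exponent via the Abramov/Kakutani formulas.

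First, given $t<0$, fix a large integer $N \geq 2$ (to be pinned down at the end) and set $q_0=k_0$, $q_1=k_0+N-1$. Proposition \ref{symbolprop} supplies a continuous injection $\pi\colon \Sigma(q_0,q_1)\to\Omega$ which conjugates the shift $\sigma$ with the induced map $R(\pi(\underline{a}))=f^{a_0}(\pi(\underline{a}))$ on $H=\pi(\Sigma(q_0,q_1))$. Let $\hat\mu$ be the Bernoulli $(1/N,\ldots,1/N)$ measure on $\Sigma(q_0,q_1)$ and set $\tilde\mu=\pi_*\hat\mu$. Since $h_{\hat\mu}(\sigma)=\log N$ and $\pi$ is a conjugacy onto its image, $h_{\tilde\mu}(R)=\log N$. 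Define the roof $r(\pi(\underline{a}))=a_0$ and extend $\tilde\mu$ to an $f$-invariant probability measure on $\Omega$ by the Kakutani tower construction
\[
\mu=\frac{1}{\bar r}\sum_{n\geq 0}f^n_*\bigl(\tilde\mu|_{\{r>n\}}\bigr), \qquad \bar r=\int r\,d\tilde\mu=\frac{q_0+q_1}{2}.
\]
By Proposition \ref{symbolprop}(a) and the characterization \eqref{eu}, every point of $H$ lies in $\Lambda$, and $f$-invariance of $\Lambda$ gives $\mu(\Lambda)=1$, so $\mu\in\mathcal M_0(f)\subset\mathcal M(f)$.

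Abramov's formula yields $h_\mu(f)=h_{\tilde\mu}(R)/\bar r=\log N/\bar r$, while the tower formula gives
\[
\lambda^u(\mu)=\frac{1}{\bar r}\int \log\|D_zf^{r(z)}|E^u_z\|\,d\tilde\mu(z).
\]
Plugging in the telescoped lower bound from Proposition \ref{symbolprop}(b), namely $\log \|D_zf^{r(z)}|E^u_z\|\geq \log C+\lambda^u(\delta_Q)\,r(z)$, and integrating against $\tilde\mu$ gives
\[
\lambda^u(\mu)\geq \lambda^u(\delta_Q)-\frac{|\log C|}{\bar r}.
\]
Since $h_\mu(f)>0$, Lemma \ref{maximal} ensures $\mu\neq\delta_Q$ and hence $\lambda^u(\delta_Q)-\lambda^u(\mu)\geq 0$. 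Combining with $t<0$,
\[
h_\mu(f)-t\lambda^u(\mu)+t\lambda^u(\delta_Q)=\frac{\log N}{\bar r}+t\bigl(\lambda^u(\delta_Q)-\lambda^u(\mu)\bigr)\geq \frac{\log N-(-t)|\log C|}{\bar r}.
\]
The constant $C\in(0,1)$ from Proposition \ref{symbolprop} is independent of $N$, so choosing any $N>\exp\!\bigl((-t)|\log C|\bigr)$ makes the right-hand side strictly positive, which gives the desired free-energy inequality.

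The main obstacle in the whole strategy — producing, for each prescribed level of the Lyapunov exponent near $\lambda^u(\delta_Q)$, a symbolic subsystem with enough entropy and with unstable Lyapunov exponent controlled on the $O(1/\bar r)$ scale — was already absorbed into Propositions \ref{induce} and \ref{symbolprop}; what remains here is the purely thermodynamic step of trading off entropy against Lyapunov exponent, and this is made easy by the fact that both quantities scale as $1/\bar r$ while the threshold $(-t)|\log C|$ does not. The only residual point to verify carefully is the identity $\pi\circ\sigma=f^{r}\circ\pi$, which is immediate from the construction of $\pi$ as the intersection of the descending sequences $\omega_j^s$ and $\omega_j^u$.
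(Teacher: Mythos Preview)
Your proof is correct and follows essentially the same strategy as the paper: lift the maximal-entropy (Bernoulli) measure on a full-shift symbolic model from Proposition~\ref{symbolprop}, spread it over the tower, and balance the Abramov entropy $\log N/\bar r$ against the Lyapunov defect $|\log C|/\bar r$ coming from Proposition~\ref{symbolprop}(b). The only differences are cosmetic---the paper takes symbols in $\{q-\sqrt{q}+1,\ldots,q\}$ and bundles $q$ shift-steps via $\sigma^q$, whereas you take symbols in $\{k_0,\ldots,k_0+N-1\}$ and work directly with $\sigma$; your version is slightly more streamlined, and your appeal to Lemma~\ref{maximal} is in fact unnecessary since only the upper bound $\lambda^u(\delta_Q)-\lambda^u(\mu)\le|\log C|/\bar r$ is used in the final inequality.
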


\begin{proof}
Let $q>0$ be the square of a large integer, to be determined later depending on $t$.
Set $\Sigma(q)=\Sigma(q-\sqrt{q}+1,q)$, and
let $\sigma\colon\Sigma(q)\circlearrowleft$ denote the left shift.
For each $\underline{a}\in\Sigma(q)$ define
$r(\underline{a})=\sum_{i=0}^{q-1}a_i$. 
Given a $\sigma^q$-invariant Borel probability measure $\mu$,
define a Borel measure $\mathcal L(\mu)$ by
$$\mathcal L(\mu)=\frac{1}{\int rd\mu}\sum_{[a_0,a_1,\ldots,a_{q-1}]}\sum_{i=0}^{a_0+a_1+\cdots+a_{q-1}-1} f_*^i(\pi_*(\mu |_{[a_0,a_1,\ldots,a_{q-1}]})),$$
where $[a_0,a_1,\ldots,a_{q-1}]=\{\underline{b}\in\Sigma(q)\colon b_i=a_i,\  i\in\{0,1,\ldots,q-1\}\}$
and $\pi\colon\Sigma(q)\to\Omega$ is the coding map given by Proposition \ref{induce}.
Then $\mathcal L(\mu)$ is an $f$-invariant and a probability. 
For $t<0$ define 
 $\Phi_t\colon\Sigma(q)\to\mathbb R$ by 
$$\Phi_t(\underline{a})=-t\log\|Df^{r(\underline{a})}|E^u_{\pi(\underline{a})}\|.$$
From Proposition \ref{symbolprop}, $\Phi_t$ is continuous and satisfies
\begin{align*}\Phi_t(\underline{a})&\geq -tq\log C -tr(\underline{a})\lambda^u(\delta_Q).\end{align*}
Since $q^2/2\leq r(\underline{a})\leq q^2$ and $0<C<1$ we have
\begin{equation*}
\frac{\Phi_t(\underline{a})}{r(\underline{a})}\geq \frac{-2t\log C}{q} -t\lambda^u(\delta_Q).\end{equation*}
Let $\mu_0$ denote the measure of maximal entropy of $\sigma^q$.
For each integer $n\geq1$
set $P_n=\{\underline{a}\in\Sigma(q)\colon \sigma^{qn}(\underline{a})=\underline{a}\}.$ 
Since $r$ and $\Phi_t$ are continuous,
as $n\to\infty$ we have
\begin{equation*}
\frac{1}{\#P_n}\sum_{\underline{a}\in P_n} r(\underline{a})\to \int r d\mu_0\ \text{ and }\
 \frac{1}{\#P_n}\sum_{\underline{a}\in P_n} \Phi_t(\underline{a})\to\int \Phi_t d\mu_0.\end{equation*}
 It follows that
$$\frac{\int\Phi_td\mu_0}{\int r d\mu_0} \geq \frac{-2t\log C}{q} -t\lambda^u(\delta_Q),$$
for otherwise 
we would obtain a contradiction.
Since the entropy of $(\sigma^q,\mu_0)$ is $q\log\sqrt{q}$ and $\int rd\mu_0\leq q^2$
we obtain
\begin{align*}
P(-t\log J^u)\geq h(\mathcal L(\mu_0))-t\int \log J^u d\mathcal L(\mu_0)&=\frac{1}{\int r d\mu_0}
\left(q\log\sqrt{q}+\int \Phi_td\mu_0\right)\\
&\geq \frac{1}{q}\log\sqrt{q}-\frac{2t\log C}{q}-t\lambda^u(\delta_Q)>-t\lambda^u(\delta_Q).
\end{align*}
The strict inequality in the last line holds provided $q>e^{4t\log C}$.
\end{proof}

\begin{remark}
{\rm It is not hard to show that the set $\pi(\Sigma(q))$ is a hyperbolic set. However we do not need this fact.}
\end{remark}

To finish, it is left to show $P(-t\log J^u)=-t\lambda_M^u+o(1)$ as $t\to-\infty$ provided $a=a^*$.
According to \cite{Tak15} let us call 
a measure $\mu\in\mathcal M(f)$  a {\it $(-)$-ground state}
if there exists a sequence $\{t_n\}_n$, $t_n\searrow-\infty$ such that $\mu_{t_n}$ is an equilibrium measure for $-t_n\log J^u$
and $\mu_{t_n}$ converges weakly to $\mu$ as $n\to\infty$.
If $P(-t\log J^u)+\lambda_M^ut\nrightarrow0$ as $t\to-\infty$, 
then the upper semi-continuity of entropy (see \cite{SenTak1}) would imply the existence of a $(-)$-ground state with positive entropy.
If $\det Df>0$, we obtain a contradiction to \cite[Thereom A(b)]{Tak15} which states that the Dirac measure at $Q$ is the unique $(-)$-ground state.
Even if $\det Df<0$, the proof of \cite[Thereom A(b)]{Tak15} works and we obtain the same contradiction. 
This completes the proof of the main theorem. \qed

\subsection{Abundance of parameters satisfying $\mathcal M_0(f)=\mathcal M(f)$}\label{equal}
In the main theorem we have reduced the class of measures to consider:
only those measures which give full weight to the Borel set $\Lambda$ were taken into consideration.
To claim that $\mathcal M_0(f)=\mathcal M(f)$ holds for many parameters except $a^*$,
some preliminary discussions are necessary.

From the Oseledec theorem \cite{Ose68} and the two-dimensionality of the system,
one of the following holds for each measure $\mu\in\mathcal M(f)$ which is ergodic:

\begin{itemize}
\item[(a)] there exist a real number $\chi(\mu)$ such that
for $\mu$-a.e. $z\in\Omega$ and for any vector $v\in T_z\mathbb R^2\setminus\{0\}$,
$$\lim_{n\to\pm\infty}\frac{1}{n}\log\|Df^n(v)\|=\chi(\mu)\ \text{and}\ 
\int\log|\det Df|d\mu=2\chi(\mu);$$

\item[(b)]
there exist two real numbers $\chi^1(\mu)<\chi^2(\mu)$ and for $\mu$-a.e. $z\in\Omega$
a non-trivial splitting $T_z\mathbb R^2=E^1_z\oplus E^2_z$ such that
 for any vector $v^i\in E^i_z\setminus\{0\}$ $(i=1,2)$,
$$\lim_{n\to\pm\infty}\frac{1}{n}\log\|Df^n(v^i)\|=\chi^i(\mu)\ (i=1,2)\ \text{and}\ 
\int\log|\det Df|d\mu=\chi^1(\mu)+\chi^2(\mu).$$
\end{itemize}
We say $\mu$ is a {\it hyperbolic measure}
if (b) holds and $\chi^1(\mu)<0<\chi^2(\mu)$.

\begin{lemma}\label{m0}
Every $f$-invariant ergodic Borel probability measure is a
hyperbolic measure if and only if $\mathcal M_0(f)=\mathcal M(f)$.
\end{lemma}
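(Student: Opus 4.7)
The plan is to combine the Oseledec multiplicative ergodic theorem with Pesin regularity theory, exploiting the fact that $f$ is strongly area-contracting. Since $|\det Df|\asymp b\ll 1$ uniformly on $R$, we have $\int\log|\det Df|d\mu<0$ for every $\mu\in\mathcal M(f)$. For ergodic $\mu$ this forces $\chi(\mu)<0$ in case (a) and $\chi^1(\mu)<0$, $\chi^1(\mu)+\chi^2(\mu)<0$ in case (b); in particular, $\mu$ is hyperbolic exactly when $\mu$ is of type (b) with $\chi^2(\mu)>0$. I would record this dichotomy as a preliminary remark.

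For the ``only if'' direction I will argue by contraposition. Let $\mu$ be ergodic and non-hyperbolic. I claim $\mu(\Lambda)=0$, whence $\mu\notin\mathcal M_0(f)$. If $\mu$ is of type (a), then for $\mu$-a.e.\ $z$ and every nonzero $v\in T_z\mathbb R^2$, $\lim_{n\to\infty}n^{-1}\log\|Df^{-n}(v)\|=-\chi(\mu)>0$, so no one-dimensional subspace $E^u_z$ can satisfy \eqref{eu}. If $\mu$ is of type (b) with $\chi^2(\mu)\leq 0$, then for $\mu$-a.e.\ $z$ the limit $\lim_{n\to\infty}n^{-1}\log\|Df^{-n}(v)\|$ equals $-\chi^1(\mu)>0$ for $v\notin E^2_z$ and equals $-\chi^2(\mu)\geq 0$ for nonzero $v\in E^2_z$. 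In either case \eqref{eu} fails, so a full-$\mu$-measure set of points does not belong to $\Lambda$.

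For the ``if'' direction, by ergodic decomposition and the fact that $\Lambda$ is Borel and $f$-invariant, it suffices to show $\mu(\Lambda)=1$ for each ergodic $\mu$. By hypothesis $\mu$ is hyperbolic, so $\chi^2(\mu)>0$. Fix $\chi\in(0,\chi^2(\mu))$ and $\epsilon\in(0,\chi)$ small. Setting $E^u_z=E^2_z$, the Oseledec-Pesin regularity theorem (as formulated in \cite{Kat80}) produces, for $\mu$-a.e.\ $z$, an integer $k=k(z)\geq 1$ such that
\[
\|D_{f^m(z)}f^{-n}(v^u)\|\leq e^{\epsilon k}e^{-(\chi-\epsilon)n}e^{\epsilon|m|}\|v^u\|
\]
holds for every $m\in\mathbb Z$, $n\geq 1$ and $v^u\in Df^m(E^u_z)$, the $e^{\epsilon|m|}$ factor absorbing the subexponential (tempered) variation of the Pesin constants along the orbit. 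Thus $z\in\Lambda_k(\chi,\epsilon)$ for $\mu$-a.e.\ $z$, so $\mu(\Lambda(\chi,\epsilon))=1$ and therefore $\mu(\Lambda)=1$.

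The main obstacle is essentially bookkeeping: one must verify that the paper's two-sided definition of $\Lambda_k(\chi,\epsilon)$, which demands uniform backward contraction at every point $f^m(z)$ on the orbit with only the mild factor $e^{\epsilon|m|}$, is precisely captured by the Pesin regular set produced by applying Oseledec's theorem to both $f$ and $f^{-1}$. No hard new estimate is required beyond this translation and the observation that area contraction pins down the sign of the sum of exponents.
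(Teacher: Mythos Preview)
Your argument is correct and follows essentially the same route as the paper: both proofs reduce the lemma to the equivalence ``for ergodic $\mu$, $\mu(\Lambda)=1$ iff $\mu$ is hyperbolic'' together with ergodic decomposition, the only difference being that the paper cites \cite{Kat80} for this equivalence while you spell out the Oseledec/Pesin argument explicitly. One cosmetic slip: you have the ``if'' and ``only if'' labels interchanged---showing that a non-hyperbolic ergodic $\mu$ lies outside $\mathcal M_0(f)$ is the contrapositive of $\mathcal M_0(f)=\mathcal M(f)\Rightarrow$ every ergodic measure is hyperbolic (the ``if'' part), and your second paragraph, which assumes every ergodic measure is hyperbolic and invokes ergodic decomposition, is the ``only if'' part.
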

\begin{proof}
Let $\mu\in \mathcal M(f)$ be ergodic. Then $\mu(\Lambda)=1$ if and only if $\mu$ is a hyperbolic measure,
see e.g., \cite{Kat80}.
The ``if" part follows from this.
The ``only if" part is a consequence of the ergodic decomposition of invariant Borel probability measures.
\end{proof}

 It was proved in \cite{Tak12,Tak13} that if additionally $\{f_a\}$ is $C^4$ in $a,x,y$, then for sufficiently small $b>0$ there exists a set $\Delta$ of $a$-values 
in $(a^{**},a^*]$
 containing $a^*$ 
with the following properties: 
\begin{itemize}
\item $\displaystyle{\lim_{\varepsilon\to+0}} (1/\varepsilon){\rm Leb}(
\Delta\cap[a^*-\varepsilon,a^*])=1$, where {\rm Leb}$($$\cdot$$)$ denotes the one-dimensional Lebesgue measure;

\item if $a\in\Delta$, then the Lebesgue measure of the set $\{z\in\mathbb R^2\colon \text{$\{f^n(z)\}_{n\in\mathbb N}$ is bounded}\}$
is zero;

\item  if $a\in\Delta$, then any ergodic measure is a hyperbolic measure.

\end{itemize}
In other words, the dynamics for parameters in $\Delta$ is like Smale's horseshoe.
However, whether or not the dynamics is uniformly hyperbolic for $a\in\Delta\setminus\{a^*\}$  is a wide open problem.
We even do not know if there exists an increasing sequence of uniformly hyperbolic parameters
in $\Delta$ converging to $a^*$.

\subsection*{Acknowledgments}
Partially supported by the Grant-in-Aid for Young Scientists (A) of the JSPS, Grant No.15H05435
and the JSPS Core-to-Core Program ``Foundation of a Global Research
Cooperative Center in Mathematics focused on Number Theory and Geometry".

\bibliographystyle{amsplain}

\end{document}